\numberwithin{equation}{section}
\numberwithin{figure}{section}
\newtheorem{theorem}{Theorem}[section]
 \newtheorem{thm}[theorem]{Theorem}
 \newtheorem{assumption}{Assumption}
 \newtheorem{rem}[theorem]{Remark}
 \newtheorem{defn}[theorem]{Definition}
 \newtheorem{lem}[theorem]{Lemma}
 \newtheorem{prop}[theorem]{Proposition}
 \newtheorem{cor}[theorem]{Corollary}
 \newtheorem{ex}[theorem]{Example}
\newcommand{\be}{\begin{equation}}
\newcommand{\ee}{\end{equation}}
\newcommand{\bea}{\begin{eqnarray}}
\newcommand{\eea}{\end{eqnarray}}
\newcommand{\beann}{\begin{eqnarray*}}
\newcommand{\eeann}{\end{eqnarray*}}
\newcommand{\benn}{\begin{equation*}}
\newcommand{\eenn}{\end{equation*}}
\def\ra{\rightarrow}
\def\txtd{{\textnormal{d}}}
\def\txte{{\textnormal{e}}}
\def\txti{{\textnormal{i}}}
\def\txtD{{\textnormal{D}}}
\newcommand{\eps}{\varepsilon}
\newcommand{\vp}{\varphi}
\numberwithin{equation}{section}
\newcommand{\Bcal}{{\mathcal B}}
\newcommand{\Fcal}{{\mathcal F}}
\newcommand{\Xcal}{{\mathcal X}}
\newcommand{\R}{\mathbb{R}}
\newcommand{\N}{\mathbb{N}}
\renewcommand{\P}{\mathbb{P}}
\newcommand{\E}{\mathbb{E}}
\newcommand{\C}{\mathbb{C}}
\subjclass[2010]{Primary: 60H15; 92C20; Secondary: 34B10; 35B65;}
\begin{document}
\title[Gradient flows for the stochastic Amari neural field model]{A gradient flow formulation for the stochastic Amari neural field
model}
\author{Christian Kuehn}
\address{Technical University of Munich\\
Faculty of Mathematics\\
Research Unit ``Multiscale and Stochastic Dynamics''\\
85748 Garching bei M\"{u}nchen\\
Germany}
\email{ckuehn@ma.tum.de}
\author{Jonas M. T\"{o}lle}
\address{Universit\"{a}t Augsburg\\
Institut f\"{u}r Mathematik\\
86135 Augsburg\\
Germany}
\curraddr{Universit\"{a}t Ulm\\
Fakult\"{a}t f\"{u}r Mathematik und Wirtschaftswissenschaften\\
Institut f\"{u}r Analysis\\
89069 Ulm\\
Germany}
\email{jonas.toelle@math.uni-augsburg.de}
\begin{abstract}
We study stochastic Amari-type neural field equations, which are mean-field
models for neural activity in the cortex. We prove that under certain
assumptions on the coupling kernel, the neural field model can be
viewed as a gradient flow in a nonlocal Hilbert space. This makes
all gradient flow methods available for the analysis, which could
previously not be used, as it was not known, whether a rigorous gradient
flow formulation exists. We show that the equation is well-posed in
the nonlocal Hilbert space in the sense that solutions starting in
this space also remain in it for all times and space-time regularity
results hold for the case of spatially correlated noise. Uniqueness
of invariant measures, ergodic properties for the associated Feller
semigroups, and several examples of kernels are also discussed. 
\end{abstract}

\date{\today}
\keywords{Gradient flow in Hilbert space; stochastic Amari neural field equation;
nonlocal Hilbert space; spatially correlated additive
noise; space-time regularity of solutions; nonnegative kernel; unique
invariant measure; ergodic Feller semigroup; mathematical neuroscience. }
\thanks{CK acknowledges financial support by a Lichtenberg Professorship.
JMT would like to thank Dirk Bl\"{o}mker for some useful comments.
Both authors are indebted to the anonymous reviewers for several helpful
remarks.}
\maketitle

\section{Introduction}

The stochastic integro-differential equation we study is \be
\label{eq:introAmari} \partial_t u =-\alpha u + \int_{\Bcal} w(\cdot,y)f(u(y,t))\,
\txtd y+\varepsilon\partial_t W \ee where $(x,t)\in\Bcal\times[0,T)$,
$\Bcal\subset\R^{d}$ is the spatial domain modeling the cortex, $u=u(x,t)\in\R$
is the neural field interpreted as the average/mean-field activity
of the neural network, the kernel $w$ models the connections between
neurons, $f$ is the main nonlinearity representing the firing rate,
$\alpha>0$ is the decay rate, and $W=W(x,t)$ is a (spatially correlated,
additive) noise controlled via the parameter $\varepsilon\geq0$.
We shall write $BW$ instead of $W$ below, where $B$ is the covariance
operator. The precise mathematical formulation for~(\ref{eq:introAmari})
starts in the subsequent Section~\ref{sec:The-stochastic-Amari}.
In this introduction, we outline the main setting and our results.

Deterministic neural field models are well-established in the modeling
of mean-field cortex activity. The version~(\ref{eq:introAmari})
(for $\varepsilon=0$) is frequently attributed to \citet{Amari}
while a version with $f$ outside the integral is attributed to \citet{WilsonCowan}.
Since the kernel $w$ models neuronal synaptic connections, it is
often parametrized to ensure that different connection structures
can be modeled, which may arise e.g.~due to synaptic plasticity.
The noise term may either be viewed as modeling external input noise
or it can be viewed as a correction due to finite population size
or population heterogeneity.

A key motivation to study neural fields arises from neurological disorders,
such as Parkinson or epilepsy. For example, epileptic seizures can
be tracked by recording EEG (or ECoG) data, which essentially represent
a partial neural field measurement on the upper (or lower) layers
of the cortex. The general hypothesis is that there are temporary
phases, also called epileptic seizures, during which the brain operates
in a completely different dynamical regime. Whether this changing
activity pattern can be related/modeled to changing the parameters
in the neural connectivity network, and therefore changing the kernel
in a neural field model, remains an open problem currently under discussion
in multiple scientific communities.

However, even during normal functioning of the brain, several topics
such as wave propagation, visual hallucinations, orientation and memory
functions have been connected to continuum neural field models~\citep{Bressloff}
such as \eqref{eq:introAmari}; see also~\citep{Coombes,Ermentrout3}
for broader introductions and further references. Although the analysis
of spatio-temporal pattern-formation for deterministic neural fields
is well-established, only very recently there has been a surging interest
in stochastic neural field equations. One broad motivation is to understand
the effect of finite population size~\citep{Bressloff3,RiedlerBuckwar,TouboulHermannFaugeras},
while another is to take into account environmental fluctuations.
A very concrete application is to understand the role of noise-induced
fluctuations~\citep{SchwalgerDegerGerstner,WebberBressloff} in perceptual
bistability~\citep{Moreno-BoteRinzelRubin,vanEe}. More precisely,
it has been hypothesized that noise-induced bistability in neural
fields switching between different invariant sets is related to switches
between visual perception of ambiguous visual patterns. An example
that noise-induced switching between locally stable deterministic
steady states is indeed possible for~\eqref{eq:introAmari} is shown
in Figure~\ref{fig:01}.\\

\begin{figure}[htbp] 	\centering 	\begin{overpic}[width=0.95\textwidth]{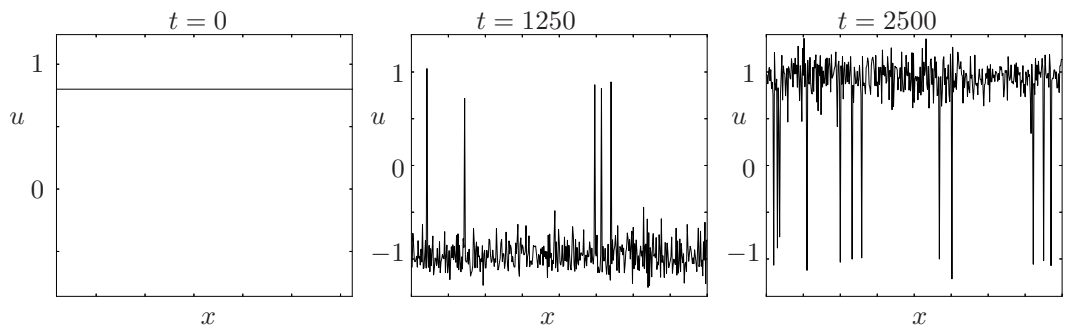}				 		\put(50,0){\scalebox{1}{$x$}} 		\put(85,0){\scalebox{1}{$x$}} 		\put(18,0){\scalebox{1}{$x$}} 		\put(0,19){\scalebox{1}{$u$}} 		\put(34,19){\scalebox{1}{$u$}} 		\put(68,19){\scalebox{1}{$u$}} 		\put(2,12){\scalebox{1}{$0$}} 		\put(2,24){\scalebox{1}{$1$}} 		\put(36,23){\scalebox{1}{$1$}} 		\put(36,14){\scalebox{1}{$0$}} 		\put(34,6){\scalebox{1}{$-1$}} 		\put(69,23){\scalebox{1}{$1$}} 		\put(69,14){\scalebox{1}{$0$}} 		\put(67.5,6){\scalebox{1}{$-1$}} 		\put(15,28){\scalebox{1}{$t=0$}} 		\put(44,28){\scalebox{1}{$t=1250$}} 		\put(78,28){\scalebox{1}{$t=2500$}} 	\end{overpic} 	\caption{\label{fig:01}Numerical simulation~\eqref{eq:introAmari} with 	$f(u)= (u+1)(1-u)(u-0.1)$, $w(x,y)=J(x-y)$ for  	$J(z)=1/(\tilde{\sigma}\sqrt{2\pi})\exp(-z^2/(2\tilde{\sigma}))$ for  	$\tilde{\sigma}=0.05$, $x\in\Bcal=[-80,80]$, $\alpha=0.1$, space-time  	white noise with $\varepsilon=0.5$, and $u(x,0)\equiv 0.8$. The three  	snapshots clearly show that there is metastability between two locally 	deterministically stable states. In particular, the neural field  	behaves reminiscent of a gradient-type system with additive noise.} \end{figure} 

In summary, the main biological insights to be gained from continuum
neural field models are obtained via the analysis of the patterns
they produce, and how these patterns depend upon parameters. A general
observation for many different choices of kernels $w$ and firing
rates $f$ is that multiple patterns are possible, while for certain
parameter ranges the model seems to select just a single pattern.
To clarify this effect, one natural conjecture is that one should
use mathematical analysis to determine, whether there are conditions,
e.g.~on the kernel $w$ that lead to different mathematical evolution
equation structures, which naturally distinguish between a neural
field operating in a multistable pattern formation mode, or in a single
global dissipative mode.

The basic theory for~(\ref{eq:introAmari}), including existence,
regularity, Galerkin approximations and large deviations of~(\ref{eq:introAmari}),
has been covered in the work by~\citet{KuehnRiedler14} for bounded
domains, while unbounded domains are considered by~\citet{FaugerasInglis}.
The influence of noise on traveling waves in stochastic neural fields
has been studied intensively in recent years~\citep{BressloffWebber,InglisMacLaurin,KilpatrickErmentrout,Lang,PollKilpatrick,
KruegerStannat,Kruger:2017eb}. Yet, a sharp study of noise-induced switching rates can only be
carried out for~(\ref{eq:introAmari}) beyond standard large deviations
if one can use a gradient structure, which is required, e.g., to generalize
Kramers' law~\citep{Berglund3}. In fact, trying to transfer the Kramers' law results from the
local stochastic partial differential equation setting in~\citep{Berglund3}
to the nonlocal stochastic integral equation~(\ref{eq:introAmari})
was one of the motivations for this work; see also Figure~\ref{fig:01}
for the numerical confirmation that gradient-like noise-induced switching
dynamics can appear in neural field models.

However, even in the deterministic setting ($\varepsilon=0$) it is
not known, whether we can find a gradient structure to rewrite~(\ref{eq:introAmari})
as \be \label{eq:intrograd} \partial_t u = -\nabla_{\Xcal} \Fcal(u),
\qquad{}u(\cdot,t)=u(t)\in\Xcal, \ee where $\Xcal$ is a suitable
function space, e.g., a Hilbert, Banach or even metric space~\citep{AmbrosioGigliSavare},
and $\Fcal:\Xcal\ra\mathbb{R}$ is a functional, which often has the
natural interpretation of an energy, entropy, or some other physical
notion. Of course, to have such a gradient structure in the (stochastic)
neural field case would not only be interesting for Kramers' law as
suggested by~\citet{KuehnRiedler14} but also open up a general area
of techniques, which has been extremely successful for other differential
equations~\citep{Jordan:1998bf,Otto}.

Furthermore, if we can characterize the types of kernels for which
gradient structures exist, it would give us an understanding, if and
when the brain might be working in two different regimes such as energy-decay
versus complex non-equilibrium pattern formation. Unfortunately, direct
calculations in several works have shown~\citep{EnculescuBestehorn,KuehnRiedler14}
that one does not expect any gradient-structure of the form~(\ref{eq:intrograd})
in the \emph{standard Lebesgue and Sobolev spaces} for neural fields,
despite many analogies to certain classical PDEs with such structures~\citep{LaingTroy}.
See \citep{daSilvaPereira2018} for a recent contribution on the topic
of gradient flows for neural field equations with weights in spaces
of continuous functions.

In this work, we show that a gradient structure actually does exist
for (stochastic) neural fields. We construct a nonlocal Hilbert space
$\Xcal$ using functional-analytic methods, which allows us to obtain
an exact gradient system. For the gradient framework, we analyze well-posedness
of the resulting system in the nonlocal space $\Xcal$ using methods
from stochastic analysis. Using the gradient structure we can characterize
invariant measures of the process. Furthermore, we investigate ergodic
properties of Feller semigroups generated by the neural field equation.
We note that a related idea was used for the stochastic porous media
equation using monotonicity methods by \citet{Ren:2007dm,Rockner:2008cj}.

Furthermore, we crucially note that $\Xcal$, and the related gradient
structure, only exist for a certain classes of kernels with sufficiently
dominant excitation or inhibition (see Section \ref{sec:Examples}
below for examples), while it fails for other classes. Kernels may,
depending upon parameters, fall into different classes so that in
underlying applications we might obtain major transitions in neural
fields between different mathematical structures. In fact, a similar
transition between classes has recently been observed in the context
of entropy/gradient-structures for local cross-diffusion systems~\citep{JuengelKuehnTrussardi}
modeling herding behavior, where parameter variation can destroy the
entropy structure.

We conjecture that our new approach may allow us to now consider many
other stochastic variants of models from biological applications,
e.g., nonlocal Fisher-KPP equations~\citep{AchleitnerKuehn,
BerestyckiNadinPerthameRyzhik,Gourley}, nonlocal aggregation models~\citep{TopazBertozziLewis}, or nonlocal
swarming systems~\citep{MogilnerEdelsteinKeshet} via a gradient
flow approach. Another important future perspective of our framework
is to obtain sharp asymptotics for nonlocal small noise asymptotics
and large deviation principles~\citep[Chapter 12]{DaPrZa:2nd} similar
to Kramers' law recently obtained for stochastic partial differential
equations~\citep{BerglundGentz10,Barret,BerglundDiGesuWeber}.

\subsection*{Structure of the the paper}

In Section \ref{sec:The-stochastic-Amari}, we introduce the mathematical
model for equation \eqref{eq:introAmari} and give the main Assumptions
\ref{assu:one}--\ref{assu:HS-op} for the existence and uniqueness
of solutions to the stochastic evolution equation \eqref{eq:spde-nongradient}.
In Subsection \ref{subsec:Gradients-in}, we shall introduce the nonlocal
Hilbert space and the gradient flow structure on it. The stochastic
gradient flow is introduced in Subsection \ref{subsec:Gradient-flow-formulation},
where also our final Assumption \ref{assu:5} is formulated. Invariance
and pathwise regularity of the solutions are discussed in Subsections
\ref{subsec:Galerkin-approximation} and \ref{subsec:Pathwise-regularity},
respectively. The existence and uniqueness of invariant measures and
the ergodicity of the semigroup associated to the stochastic equation
are studied in Section \ref{sec:Ergodicity}. In Section \ref{sec:Examples},
we give examples for the nonnegativity condition of the kernels and
discuss an equivalent condition for this which is given by Fourier
transforms. The concluding summary is given in Section \ref{sec:Summary-Discussion},
followed by Appendix \ref{sec:Cylindrical-Wiener-processes} on cylindrical
Wiener processes.

\section{\label{sec:The-stochastic-Amari} The stochastic Amari model}

Let $\Bcal\subset\R^{d}$ be non-empty, bounded and closed. From now
on, denote 
\[
H:=L^{2}(\Bcal).
\]

Formalizing equation (\ref{eq:introAmari}), let $\alpha>0$ (decay
rate for the potential), $\varepsilon\ge0$ (noise level parameter)
and consider the following stochastic integro-differential equation
in $H$: 
\begin{equation}
\txtd U_{t}=[-\alpha U_{t}+KF(U_{t})]\,\txtd t+\eps B\,\txtd W_{t},\quad U_{0}=u_{0}\in L^{2}(\Bcal),\quad t\in[0,T],\label{eq:spde-nongradient}
\end{equation}
where $T>0$ is some finite time horizon. Let us discuss the assumptions
on the coefficients.

\begin{assumption} \label{assu:one} Let $f:\R\to(0,+\infty)$ be
a globally Lipschitz continuous \emph{gain function}. \end{assumption}

Typically, we might want to include the \emph{sigmoid function} 
\[
f(s)=(1+\txte^{-s})^{-1}\quad\text{or}\quad f(s)=(\tanh(s)+1)/2
\]
as examples, which are commonly used in the analysis of neural fields~\citep{Bressloff}.
Let $F(g)(x):=f(g(x))$, $g\in H$, be the associated Nemytskii operator
for $f$. Since $f$ is Lipschitz, we get that $F$ is a nonlinear
Lipschitz continuous operator $F:H\to H$, see e.g.~\citep[Theorem II.3.2]{Show}.

Let
\[
Kg(x):=\int_{\Bcal}J(x-y)g(y)\,\txtd y,\quad g\in L^{2}(\Bcal),
\]
where $J:\R^{d}\to\R$, measurable, satisfies the following.

\begin{assumption} \label{assu:new}Assume that
\begin{enumerate}
\item $J(x)=J(-x)$, for every $x\in\Bcal+\Bcal:=\{x+y\;:\;x,y\in\Bcal\}$,
\item $J\in L^{2}(\Bcal+\Bcal)\cap C(\Bcal+\Bcal)$,
\item $J$ gives rise to a \emph{nonnegative definite kernel }in the following
sense 
\begin{equation}
\sum_{i,j=1}^{n}c_{i}c_{j}J(x_{i}-x_{j})\ge0,\label{eq:pos-defi-1}
\end{equation}
for any choice of $n\in\N$, $\{x_{1},\ldots,x_{n}\}\subset\Bcal$,
and $\{c_{1},\ldots,c_{n}\}\subset\R$. 
\end{enumerate}
\end{assumption}

This assumption that the kernel $w$ in \eqref{eq:introAmari} only
depends on the difference is typical for neural field models~\citep{Bressloff}.
Obviously, by (i), (ii) above, $K$ is a self-adjoint Hilbert-Schmidt
operator\footnote{Let $U,V$ be separable Hilbert spaces, $L_{2}(V):=L_{2}(V,V)$, where
$L_{2}(U,V)$ denotes the space of \emph{Hilbert-Schmidt operators}
from $U$ to $V$.} on $H$ that is, $K\in L_{2}(H)$. By a generalization of Mercer's
theorem to compact spaces, $K$ is even of trace class, that is, a
nuclear operator, cf. \citep[Theorem 2.6 and the remark thereafter]{Ferreira:2008jv}.
Denote by $\sigma(K)=\C\setminus\rho(K)$ the \emph{spectrum} of $K$,
where $\rho(K)$ is the \emph{resolvent set}, see e.g. \citep{ReSi1}.
By (iii) above and by \citep[Theorem 2.1]{Ferreira:2009dra} $K$
is nonnegative definite (as a linear operator on $H$), that is, 
\[
\int_{\Bcal}f(x)Kg(x)\,\txtd x\ge0,\quad f,g\in L^{2}(\Bcal),
\]
see also \citep[Theorem 3.1]{Ferreira:2013ei}. See Section \ref{sec:Examples}
below for examples of kernels that satisfy (\ref{eq:pos-defi-1}).
In Section \ref{sec:Examples}, we shall discuss also particular examples
of kernels and models, among others, from works of \citet{neuralfields2014ch1, neuralfields2014ch4, neuralfields2014ch5, neuralfields2014ch9, VeltzFaugeras2010}.

Furthermore, on a filtered normal probability space $(\Omega,\Fcal,\{\Fcal_{t}\}_{t\ge0},\P)$,
let $\{W_{t}\}_{t\ge0}$ be a \emph{cylindrical Wiener process} with
values in $H$ and covariance $Q:=\operatorname{Id}$ equal to the
identity operator on $H$, see Appendix \ref{sec:Cylindrical-Wiener-processes}
for details. Now, let $B\in L(H)$ denote the coefficient/covariance
operator for the noise term in our equation.

\begin{assumption} \label{assu:HS-op} Assume that the covariance
operator of the noise $B\in L_{2}(H)$ is Hilbert-Schmidt, nonnegative
and symmetric. \end{assumption}

This assumption essentially enforces the need for spatial correlations
in dimension $d\geq2$, see \citep{DaPrZa:2nd}. However, these correlations
are indeed quite natural in the modeling setup so we do not give up
relevant generality here. See e.g.~\citep[p. 91, equation (6.55)]{Bressloff}
or \citep[Equation (9.44)]{neuralfields2014ch9} for a justification
in terms of neuroscience to consider a continuum limit model for spatial
correlations of the noise.

Now, we have access to stochastic integration theory in infinite dimensions,
in particular, the expressions 
\[
BW_{t}=\int_{0}^{t}B\,\txtd W_{s},\text{\quad\ensuremath{\text{and}\quad(v,BW_{t})_{H},\;v\in H,\;t\ge0},}
\]
for the cylindrical Wiener process $\{W_{t}\}_{t\ge0}$ as above can
be given a meaning, see \citep[Proposition 4.26 and its proof]{DaPrZa:2nd}.
Note that by our assumptions, $\{BW_{t}\}_{t\ge0}$ is just a $B^{2}$-Wiener
process on $H$. However, we choose to make use of the more complicated
formalism of cylindrical Wiener processes (see Appendix \ref{sec:Cylindrical-Wiener-processes})
for the reason that we would like to consider different assumptions
on regularity for $B$ (see e.g. Assumption \ref{assu:5} below) so
that $B$ helps to keep track of the spatial correlations in the notation.

We continue by recalling the main solution concept.

\begin{defn} \label{def:sln1} A\emph{ solution} to (\ref{eq:spde-nongradient})
is a predictable stochastic process $U\in L^{2}(\Omega\times[0,T];H)$
with $\P$-a.s. Bochner integrable trajectories $t\mapsto U_{t}$
and with\footnote{Or, more generally, with $u_{0}\in L^{2}(\Omega,\Fcal_{0},\P;L^{2}(\Bcal))$.}
$U_{0}=u_{0}\in H=L^{2}(\Bcal)$ such that 
\[
(U_{t},v)_{H}=(u_{0},v)_{H}+\int_{0}^{t}(-\alpha U_{s}+KF(U_{s}),v)_{H}\,\txtd s+\eps\,(v,BW_{t})_{H},
\]
for every $v\in H$, $t\in[0,T]$, $\P$-a.s. \end{defn}

\begin{lem} Under our assumptions, for $u_{0}\in H$, a \emph{mild
solution} in the sense of \citep[Chapter 7]{DaPrZa:2nd} to (\ref{eq:spde-nongradient})
is a solution to (\ref{eq:spde-nongradient}) and vice versa. \end{lem}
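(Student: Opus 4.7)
The crucial observation is that $A := -\alpha\,\Id$ is a \emph{bounded} linear operator on $H$, so it generates the trivial $C_0$-semigroup $S_t = e^{-\alpha t}\Id$. This eliminates the usual domain/regularity subtleties that distinguish mild from analytically weak solutions for unbounded generators. In the mild sense, \eqref{eq:spde-nongradient} reads
\[
U_t \;=\; e^{-\alpha t}u_0 \;+\; \int_0^t e^{-\alpha(t-s)} KF(U_s)\,\txtd s \;+\; \eps\int_0^t e^{-\alpha(t-s)} B\,\txtd W_s,
\]
with each summand lying in $H$ $\P$-a.s.\ thanks to the Lipschitz continuity of $F$, the boundedness of $K$, and $B\in L_2(H)$.

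For the direction \emph{mild implies Def.~\ref{def:sln1}}, the plan is to take the inner product of the mild formula with a fixed $v\in H$ and use the elementary identity $e^{-\alpha\tau}=1-\alpha\int_0^\tau e^{-\alpha r}\,\txtd r$ together with (deterministic) Fubini to rewrite the two deterministic terms as $(u_0,v)_H-\alpha\int_0^t e^{-\alpha s}(u_0,v)_H\,\txtd s$ and $\int_0^t (KF(U_s),v)_H\,\txtd s - \alpha\int_0^t\int_0^r e^{-\alpha(r-s)}(KF(U_s),v)_H\,\txtd s\,\txtd r$. For the stochastic term, the stochastic Fubini theorem (applicable since $s\mapsto e^{-\alpha(t-s)}$ is deterministic and bounded and $B\in L_2(H)$, see \citep[Thm.~4.33]{DaPrZa:2nd}) yields
\[
\eps\int_0^t e^{-\alpha(t-s)}(v,B\,\txtd W_s)_H \;=\; \eps(v,BW_t)_H \;-\; \eps\alpha\int_0^t\int_0^r e^{-\alpha(r-s)}(v,B\,\txtd W_s)_H\,\txtd r.
\]
Summing everything and recognising the inner $s$-integrals as $(U_r,v)_H$ (via the mild formula at time $r$) collapses the $\alpha$-corrections into $-\alpha\int_0^t (U_r,v)_H\,\txtd r$, which is precisely Definition~\ref{def:sln1}.

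For the reverse direction, let $U$ be a solution in the sense of Definition~\ref{def:sln1}. Fix $v\in H$; then $t\mapsto(U_t,v)_H$ is a real-valued semimartingale with explicit decomposition into an absolutely continuous drift and the martingale $\eps(v,BW_\cdot)_H$. Apply ordinary integration by parts to $e^{\alpha t}(U_t,v)_H$; no It\^o correction appears since $t\mapsto e^{\alpha t}$ is of finite variation and the joint bracket with $(U_\cdot,v)_H$ vanishes. This produces
\[
e^{\alpha t}(U_t,v)_H \;=\; (u_0,v)_H + \int_0^t e^{\alpha s}(KF(U_s),v)_H\,\txtd s + \eps\int_0^t e^{\alpha s}(v,B\,\txtd W_s)_H.
\]
Multiplying by $e^{-\alpha t}$, applying stochastic Fubini in the opposite direction to pull the inner product back inside the stochastic integral, and using that $v\in H$ is arbitrary and both sides are elements of $H$, yields the mild identity.

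The only genuinely technical point is the invocation of stochastic Fubini; this is the step to be handled with care, but it is directly covered by the classical formulation since the kernel $e^{-\alpha(t-s)}$ is deterministic and bounded and $B\in L_2(H)$ ensures the necessary Hilbert--Schmidt integrability. Everything else is a bounded-generator computation and reduces to scalar integration by parts tested against $v$.
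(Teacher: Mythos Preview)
Your argument is correct and rests on exactly the same observation the paper uses: the generator $A=-\alpha\,\Id$ is bounded, so its domain is all of $H$ and the usual obstruction between mild and (analytically) weak solutions disappears. The paper's proof simply records this fact and then invokes \citep[Theorem 5.4 and Appendix A]{DaPrZa:2nd} (and \citep[Proposition G.0.5]{Liu:2015es}) as a black box, whereas you unpack that black box by carrying out the scalar integration-by-parts and Fubini computations explicitly. One minor terminological point: in the reverse direction, passing from $(v,\int_0^t e^{-\alpha(t-s)}B\,\txtd W_s)_H$ for all $v$ back to the $H$-valued stochastic integral is not really stochastic Fubini but rather the linearity of the It\^o integral with respect to the test vector; this is harmless, and your identification of stochastic Fubini as the one genuinely technical ingredient in the mild-to-weak direction is accurate.
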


\begin{proof} We note that $F:H\to H$ is a bounded operator and
that the domain of the infinitesimal generator of the $C_{0}$-semigroup
$\{e^{-t\alpha}\}_{t\ge0}$ is all of $H$ (which is just identity
times $-\alpha$). The claim follows now from \citep[Theorem 5.4 (and its proof) and Appendix A]{DaPrZa:2nd}.
See also \citep[Proposition G.0.5]{Liu:2015es}. \end{proof}
\begin{rem} Note that in our case, the notion of a solution in $H$
coincides both with the so-called \emph{analytically strong solutions}
and \emph{analytically weak solutions} as defined e.g. in the book
by \citet[Appendix G]{Liu:2015es}. We also note that a solution is
called \emph{weak solution} in the book by \citet{DaPrZa:2nd}. \end{rem}

For $u_{0}\in H$, we may assume that that there exists a unique solution
to~(\ref{eq:spde-nongradient}) and that $U\in C([0,T];H)$, $\P$-a.s.,
see \citep[p.~5]{KuehnRiedler14} and also \citep[Proposition 7.5]{DaPrZa:2nd}.

The next step is to study the gradient structure.

\subsection{\label{subsec:Gradients-in} Gradients in (nonlocal) Hilbert space}

Let us begin with some preliminary considerations. By the spectral
theorem, under Assumption \ref{assu:new}, $K$ has at most countably
many points in its spectrum $\sigma(K)$, all of those being real,
nonnegative and zero being their only accumulation point.

Note that upon setting 
\[
\bar{g}(x):=\left\{ \begin{aligned} & g(x), &  & \quad x\in\Bcal,\\
 & 0, &  & \quad x\in\R^{d}\setminus\Bcal,
\end{aligned}
\right.
\]
we have the convolution representation $Kg=J\ast\bar{g}$. Let $\{\lambda_{i}\}_{i\in\N}$
be the sequence of positive real eigenvalues of $K$ on $H$. In our
situation, w.l.o.g.~$\lambda_{i}\to0$, for $i\to\infty$, $\{\lambda_{i}\}\in\ell^{2}$
and there is a (possibly finite) orthonormal system $\{e_{i}\}_{i\in\N}$
of eigenfunctions for $K$. By the spectral theorem, 
\[
H=\ker K\oplus\overline{\operatorname{span}\{e_{i}\}}
\]
where the decomposition is orthogonal, cf. \citep{ReSi1}. Denote
\[
S:=(\ker K)^{\perp}=\overline{\operatorname{span}\{e_{i}\}}.
\]
$S$ is a closed Hilbert subspace of $H$. The space $S$ is key to
reformulate the problem. The next step is to endow it with a norm.
To emphasize this, we set 
\[
H_{-1}:=S
\]
and consider the (\emph{nonlocal}) norm 
\[
\|g\|_{-1}:=\|g\|_{H_{-1}}:=\|K^{-\frac{{1}}{2}}g\|_{H},\quad g\in S,
\]
where $K^{-\frac{{1}}{2}}:S\to S$ is the operator square root of
the Moore-Penrose pseudoinverse $K^{-1}:S\to S$ of $K:H\to S$, see
\citep[Section 2.1.2]{Hagen:vz}. $H_{-1}$ is a separable Hilbert
space with nonlocal norm and inner product $(\cdot,\cdot)_{-1}:=(K^{-\frac{1}{2}}\cdot,K^{-\frac{1}{2}}\cdot)_{H}$.
Note that $K^{-1}$ is nonnegative on $S$ and that $K^{-\frac{{1}}{2}}$
is the pseudoinverse of $K^{\frac{{1}}{2}}:H\to S$. The space $H_{-1}$
will be used below to reformulate the neural field equation. For later
use, we also note the relation 
\begin{equation}
\|g\|_{H}\le\|K^{\frac{1}{2}}\|_{L(H)}\|g\|_{-1}\quad g\in S,\label{eq:op-bound}
\end{equation}
which just follows from the definitions and where for separable Hilbert
spaces $U,V$, we denote by $\|\cdot\|_{L(U,V)}$ the \emph{operator
norm} and we set $\|\cdot\|_{L(U)}:=\|\cdot\|_{L(U,U)}$.

Let us also define $H_{1}:=K(S)$ with norm $\|g\|_{H_{1}}:=\|K^{\frac{{1}}{2}}g\|_{H}$.
Of course, $\|\cdot\|_{H_{1}}$ is also defined for elements in $H$
or $S$, however, it is zero on $\ker K$ and $\{h\in S\;:\;K^{2}h=0\}$.

At this point, we note that our decomposition of the kernel operator
$K$ has some similarities to the approximation by so-called \emph{Pincherle-Goursat
kernels (PG-kernels)}, see e.g. \citep{tricomi1985integral}, which
serve as finite-dimensional range approximations of $L^{2}$-kernels.
\citet{VeltzFaugeras2010} have used the PG-kernel decomposition for
the analysis of systems of neural field equations. They utilize the
explicit representation of PG-kernels for a reduction of the neural
field model considered by them to a finite number of ODEs via an orthogonal
decomposition of the eigenspaces (compare also Subsection \ref{subsec:Galerkin-approximation}
below). One main difference of their approach to ours is that we require
our kernels to be nonnegative in order to manage the case of possibly
infinitely many distinct eigenvalues while still keeping a Hilbert
space setup.

Let $\vp:\R\to\R$ be any primitive function/antiderivative of $f$.
For example, we may take $\varphi(t):=\int_{0}^{t}f(s)\,\txtd s$,
$t\in\R$. Define
\[
\Phi(u):=\int_{\Bcal}\vp(u(x))\,\txtd x,\quad u\in H,
\]
and 
\[
\Psi(u)=\Psi^{\alpha}(u):=\frac{{\alpha}}{2}\int_{\Bcal}|(K^{-\frac{{1}}{2}}u)(x)|^{2}\,\txtd x=\frac{{\alpha}}{2}\|u\|_{-1}^{2},\quad u\in S.
\]

\begin{lem} \label{lem:energy_L2} $\Phi$ is well-defined, finite
for all $u\in H$ and continuous on $H$. Furthermore, we have that
\[
\txtD\Phi(u)h=(F(u),h)_{H},\quad u,h\in H,
\]
where $\txtD\Phi(u)h$ denotes the G\^{a}teaux-directional derivative
of $\Phi$ in $u$ and in direction $h$. \end{lem}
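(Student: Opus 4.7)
The plan is to exploit the linear growth of $f$ coming from global Lipschitz continuity, together with the boundedness of $\Bcal$, to control both $\Phi$ and its difference quotients via $L^2$-norms, and then apply dominated convergence on $\Bcal$ for the directional derivative.

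First, I would fix a Lipschitz constant $L>0$ for $f$, so that $|f(s)| \le |f(0)| + L|s|$ for every $s\in\R$. Integrating the primitive $\vp(t)=\int_0^t f(s)\,\txtd s$ yields the estimate
\[
|\vp(t)| \le |f(0)||t| + \tfrac{L}{2}t^2, \qquad t\in\R.
\]
Substituting $t=u(x)$ and integrating over $\Bcal$, and using $\|u\|_{L^1(\Bcal)} \le |\Bcal|^{1/2}\|u\|_H$ (valid since $\Bcal$ is bounded), I get $|\Phi(u)| \le |f(0)||\Bcal|^{1/2}\|u\|_H + \tfrac{L}{2}\|u\|_H^2 < \infty$. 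This shows well-definedness and finiteness on all of $H$.

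For continuity on $H$, I would use the pointwise bound
\[
|\vp(a)-\vp(b)| \le \bigl(|f(0)| + L\max(|a|,|b|)\bigr)|a-b|,
\]
obtained by writing $\vp(a)-\vp(b)=\int_b^a f$ and applying the linear growth of $f$. Integrating and using Cauchy--Schwarz gives
\[
|\Phi(u)-\Phi(v)| \le |f(0)||\Bcal|^{1/2}\|u-v\|_H + L\bigl(\|u\|_H + \|v\|_H\bigr)\|u-v\|_H,
\]
which yields (locally Lipschitz) continuity on $H$.

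For the G\^ateaux derivative, fix $u,h\in H$ and study the difference quotient
\[
\frac{\Phi(u+\eps h)-\Phi(u)}{\eps} = \int_{\Bcal} \frac{\vp(u(x)+\eps h(x)) - \vp(u(x))}{\eps}\,\txtd x.
\]
Since $\vp'=f$ everywhere on $\R$, the integrand converges pointwise to $f(u(x))h(x)$ as $\eps\to 0$. To pass to the limit under the integral, I would write the difference quotient as $\int_0^1 f(u(x)+s\eps h(x))\,\txtd s\cdot h(x)$ and dominate it, for $|\eps|\le 1$, by the $L^1(\Bcal)$ function $(|f(0)|+L|u(x)|+L|h(x)|)|h(x)|$ (integrable by Cauchy--Schwarz and boundedness of $\Bcal$). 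Dominated convergence then gives
\[
\txtD\Phi(u)h = \int_{\Bcal} f(u(x))h(x)\,\txtd x = (F(u),h)_H,
\]
as claimed. The only mildly delicate point is the uniform integrable majorant for the difference quotient, but the Lipschitz linear growth of $f$ supplies it immediately; no Nemytskii-type measurability subtleties arise since $F$ has already been identified as a continuous operator $H\to H$.
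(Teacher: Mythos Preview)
Your proof is correct and follows essentially the same strategy as the paper: derive quadratic growth of $\vp$ from the linear growth of $f$, and pass to the limit in the difference quotient via dominated convergence on $\Bcal$. The only cosmetic differences are that the paper invokes the Nemytskii theorem for continuity of $\Phi$ where you give a direct local Lipschitz estimate, and the paper uses the Lagrange mean-value theorem (an intermediate point $\theta_{x,t}$) where you use the integral form $\int_0^1 f(u+s\eps h)\,\txtd s$; your version is slightly more self-contained but otherwise equivalent.
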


\begin{proof} We start by proving the claimed properties of $\Phi$.
By the Lipschitz property of $f$, 
\begin{equation}
|f(r)|\le|f(0)|+\operatorname{Lip}(f)|r|,\quad r\in\R.\label{eq:Lip}
\end{equation}
For $r>0$, by the mean-value theorem, there exists $r_{0}\in(0,r)$
with 
\[
\vp(r)-\vp(0)=rf(r_{0}).
\]
Hence 
\[
|\vp(r)|\le\operatorname{Lip}(f)|r||r_{0}|+|f(0)||r|+|\vp(0)|\le\operatorname{Lip}(f)|r|^{2}+|f(0)||r|+|\vp(0)|.
\]
Replacing $r$ by $-r$ yields the same bound for all $r\in\R$. The
first claim, in particular, the continuity of $\Phi$ on $H$ follows
now by an application of the Nemytskii theorem, see e.g. \citep[Theorem II.3.2]{Show},
that is, $\varphi(\cdot):L^{2}(\Bcal)\to L^{1}(\Bcal)$ gives rise
to a continuous nonlinear (Nemytskii) operator.

For the second claim, let $u,h\in H$. Let $t\in\R$, $t\not=0$.
By the mean-value theorem, for almost every $x\in\Bcal$, there exists
$\theta_{x,t}\in(0,1)$ with 
\[
\frac{{1}}{t}\vp(u(x)+th(x))-\vp(u(x))=f(u(x)+\theta_{x,t}th(x))h(x)
\]
and the right-hand side converges to $f(u(x))h(x)$ as $t\to0$ for
almost every $x\in\Bcal$. We have that 
\[
\frac{{1}}{t}[\Phi(u+th)-\Phi(u)]=\frac{{1}}{t}\int_{\Bcal}[\vp(u(x)+th(x))-\vp(u(x))]\,\txtd x.
\]
By the Lipschitz property of $f$ and Lebesgue's dominated convergence
theorem, the right hand side converges to $\int_{\Bcal}f(u(x))h(x)\,\txtd x=(F(u),h)_{H}$
as $t\to0$. \end{proof}

Lemma~\ref{lem:energy_L2} already indicates, how we might be able
deal with the nonlocal term. We define the functional 
\begin{equation}
\Theta(u):=-\Phi\restriction_{S}(u)+\Psi(u),\quad u\in S,\label{eq:theta}
\end{equation}
where $\Phi\restriction_{S}$ denotes the restriction of $\Phi$ to
$S$. $\Theta$ is obviously continuous on $H_{-1}$. Furthermore,
we have the following key lemma. \begin{lem} \label{lem:energy_H-1}We
have that 
\[
\txtD\Phi\restriction_{S}(u)h=(KF(u),h)_{-1},\quad u,h\in H_{-1},
\]
where $\txtD\Phi\restriction_{S}(u)h$ denotes the G\^{a}teaux-directional
derivative of $\Phi\restriction_{S}$ in $u$ and in direction $h$.

Furthermore, we have that 
\[
\txtD\Theta(u)h=\alpha(u,h)_{-1}-(KF(u),h)_{-1},\quad u,h\in H_{-1},
\]
where $\txtD\Theta(u)h$ denotes the G\^{a}teaux-directional derivative
of $\Theta$ in $u$ and in direction $h$.\end{lem}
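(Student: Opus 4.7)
\medskip

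The plan is to handle the two derivatives in sequence, reducing the first to Lemma~\ref{lem:energy_L2} plus an algebraic identity involving the pseudoinverse, and computing the second by direct expansion of a quadratic form in the $H_{-1}$-inner product.

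For the first claim, note that for any $u,h\in H_{-1}=S\subset H$ and any $t\in\R$, we have $u+th\in S$, so $\Phi\restriction_{S}(u+th)=\Phi(u+th)$. Hence the difference quotient defining $\txtD\Phi\restriction_{S}(u)h$ coincides pointwise with that defining $\txtD\Phi(u)h$, and by Lemma~\ref{lem:energy_L2} it converges to $(F(u),h)_H$ as $t\to 0$. It then remains to identify
\[
(F(u),h)_H \;=\; (KF(u),h)_{-1}, \qquad u,h\in H_{-1}.
\]
Unpacking the right-hand side by the definition of the nonlocal inner product, one has $(KF(u),h)_{-1}=(K^{-\frac{1}{2}}KF(u),K^{-\frac{1}{2}}h)_H$. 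Using $K=K^{\frac{1}{2}}K^{\frac{1}{2}}$, the fact that $\operatorname{Range}(K^{\frac{1}{2}})\subset S$, and the pseudoinverse identity $K^{-\frac{1}{2}}K^{\frac{1}{2}}=P_S$, one gets $K^{-\frac{1}{2}}KF(u)=K^{\frac{1}{2}}P_SF(u)=K^{\frac{1}{2}}F(u)$, where in the last equality the $\ker K$-component of $F(u)$ is annihilated by $K^{\frac{1}{2}}$. Then by self-adjointness of $K^{\frac{1}{2}}$ and $K^{\frac{1}{2}}K^{-\frac{1}{2}}h=h$ for $h\in H_{-1}$,
\[
(K^{\frac{1}{2}}F(u),K^{-\frac{1}{2}}h)_H \;=\; (F(u),K^{\frac{1}{2}}K^{-\frac{1}{2}}h)_H \;=\; (F(u),h)_H,
\]
where I also use the orthogonality $\ker K\perp S$ so that the $\ker K$-component of $F(u)$ contributes nothing when paired with $h\in S$. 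Combining these identifications gives the claimed formula $\txtD\Phi\restriction_{S}(u)h=(KF(u),h)_{-1}$.

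For the second claim, the quadratic functional $\Psi(u)=\frac{\alpha}{2}\|u\|_{-1}^2$ is computed by polarization in the $H_{-1}$-inner product:
\[
\Psi(u+th)-\Psi(u) \;=\; \alpha t\,(u,h)_{-1} + \tfrac{\alpha t^2}{2}\|h\|_{-1}^2,
\]
so dividing by $t$ and letting $t\to 0$ yields $\txtD\Psi(u)h=\alpha(u,h)_{-1}$. Since $\Theta=-\Phi\restriction_S+\Psi$ and the G\^ateaux derivative is linear, the two pieces combine to give $\txtD\Theta(u)h=\alpha(u,h)_{-1}-(KF(u),h)_{-1}$.

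The only delicate point is the identity $(F(u),h)_H=(KF(u),h)_{-1}$, since $F(u)$ need not lie in $S$, and the pseudoinverse $K^{-\frac{1}{2}}$ requires a careful domain/range discussion. I expect this to be the main obstacle; it is resolved by the orthogonal decomposition $H=\ker K\oplus S$ together with the defining relations $K^{\frac{1}{2}}K^{-\frac{1}{2}}|_{H_{-1}}=\Id_{H_{-1}}$ and $K^{-\frac{1}{2}}K^{\frac{1}{2}}=P_S$ on $H$, both of which follow from the Moore-Penrose construction used in the definition of $H_{-1}$.
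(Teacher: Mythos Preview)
Your proof is correct and follows essentially the same approach as the paper: reduce to Lemma~\ref{lem:energy_L2} to obtain $(F(u),h)_H$, then identify this with $(KF(u),h)_{-1}$ via the pseudoinverse relations, and handle $\Psi$ as a quadratic form. The paper compresses the algebraic identification into the chain $(F(u),h)_H=(KF(u),K^{-1}h)_H=(K^{-\frac{1}{2}}KF(u),K^{-\frac{1}{2}}h)_H=(KF(u),h)_{-1}$ and cites Showalter for $\txtD\Psi$, whereas you unpack the Moore--Penrose identities and the role of the orthogonal projection $P_S$ more explicitly and compute $\txtD\Psi$ by direct polarization; your added care about $F(u)\notin S$ in general is a point the paper leaves implicit.
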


\begin{proof} Clearly, $u,h\in H_{-1}\hookrightarrow H$ and thus
by the proof of Lemma \ref{lem:energy_L2}, 
\[
\begin{aligned}\txtD\Phi(u)h= & (F(u),h)_{H}=(KF(u),K^{-1}h)_{H}\\
= & (K^{-\frac{{1}}{2}}KF(u),K^{-\frac{1}{2}}h)_{H}=(KF(u),h)_{-1}
\end{aligned}
\]
which proves the first claim.

We get from the book by \citet[p.~91]{Show}, that the Fr\'{e}chet
derivative and thus the G\^{a}teaux-directional derivative of $\Psi$
is given by the following formula $\txtD\Psi(u)h=\alpha(u,h)_{-1}$.
The claim follows now from the preceding discussion, Lemma \ref{lem:energy_L2}
and the above. \end{proof} 

\begin{rem} In the case that $K$ is nonpositive definite, we can
redefine $H_{-1}$ by replacing $K$ by $-K$ in the definition. Now,
by changing the sign for $\Theta$ in (\ref{eq:theta}), we obtain
a gradient by a similar procedure. We can interpret the case of nonnegative
definite symmetric kernels as domination by excitation, while the
case of nonpositive definite kernels corresponds to domination of
the inhibition effects. \end{rem}

\subsection{\label{subsec:Gradient-flow-formulation} Gradient flow formulation}

Having completed the nonlocal functional setting, we now consider
the stochastic nonlocal evolution equation
\begin{equation}
\txtd V_{t}=-\txtD\Theta(V_{t})\,\txtd t+\eps B\,\txtd W_{t},\quad V_{0}=u_{0}\in H_{-1},\label{eq:spde-gradient}
\end{equation}
where $\Theta$ is defined as in \eqref{eq:theta} and where $W$
is a cylindrical Wiener process with values in $H$ and $B\in L_{2}(H)$
is nonnegative and symmetric as in the beginning of this section.
Due to the change of the ambient space, we give a further definition
for a solution to \eqref{eq:spde-gradient} as follows. \begin{defn}\label{def:sln2}
A\emph{ solution} to (\ref{eq:spde-gradient}) is an $\{\Fcal_{t}\}$-adapted
stochastic process $V\in L^{2}(\Omega\times[0,T];H_{-1})$ with $V_{0}=u_{0}\in H_{-1}$
and 
\[
(V_{t},h)_{-1}=(u_{0},h)_{-1}-\int_{0}^{t}\txtD\Theta(V_{s})h\,\txtd s+\eps\,(h,BW_{t})_{-1},
\]
for every $h\in H_{-1}$ and every $t\in[0,T]$. \end{defn}

One reason for introducing several notions of solutions is the aim
of their formal comparison. In order to justify their applicability,
we will prove that the solutions to our gradient-flow formulation
indeed coincide with the solution concept for the original formulation
of the neural field equations. In particular, in order to prove the
invariance of the smaller subspace $H_{-1}$ under the solutions in
the original formulation for initial data in $H_{-1}$, we need to
assume that the spatial correlations of the noise $W$, which are
given by the covariance operator $B$, are sufficiently small relative
to the spectrum of $K$ in a summable way, as formally captured in
the following hypothesis.

\begin{assumption}\label{assu:5} Assume that 
\begin{equation}
B\in L_{2}(H,H_{-1})\quad\text{and}\quad BK^{-1}\in L_{2}(H_{-1},H).\label{eq:smallnoiseassumption}
\end{equation}
\end{assumption}

In the case that $K$ and $B$ commute, the two conditions in \eqref{eq:smallnoiseassumption}
reduce to one, as can be seen as follows.

\begin{ex} \label{ex:jointly-diagonal} Consider the situation that
$B\in L_{2}(H)$ is diagonalized with respect to the same orthonormal
system as $K$ with eigenvalues 
\[
Be_{i}=b_{i}e_{i}\quad i\in\N.
\]
Consider the condition

\begin{equation}
\left\{ b_{i}^{2}\lambda_{i}^{-1}\right\} \in\ell^{1}.\label{eq:spectral_decay}
\end{equation}
Then we claim that \eqref{eq:spectral_decay} holds if and only if
\eqref{eq:smallnoiseassumption} holds.

Indeed, let $B$ have the specific decomposition as assumed. Then
$B$ and $K$ (and thus $B$ and $K^{-1}$) commute on $S$. Then
\[
\sum_{i=1}^{\infty}\|Be_{i}\|_{H_{-1}}^{2}=\sum_{i=1}^{\infty}(K^{-\frac{1}{2}}Be_{i},K^{-\frac{1}{2}}Be_{i})_{H}=\sum_{i=1}^{\infty}(B^{2}K^{-1}e_{i},e_{i})_{H}=\sum_{i=1}^{\infty}\frac{b_{i}^{2}}{\lambda_{i}}
\]
and thus the first part of \eqref{eq:smallnoiseassumption} holds
if and only if \eqref{eq:spectral_decay} holds. For the second part,
note that $\{\sqrt{\lambda_{i}}e_{i}\}_{i\in\N}$ is an orthonormal
basis for $H_{-1}$, so that
\[
\sum_{i=1}^{\infty}\|\sqrt{\lambda_{i}}BK^{-1}e_{i}\|_{H}^{2}=\sum_{i=1}^{\infty}\lambda_{i}\left\Vert \frac{b_{i}}{\lambda_{i}}e_{i}\right\Vert _{H}^{2}=\sum_{i=1}^{\infty}\frac{b_{i}^{2}}{\lambda_{i}}.
\]
Hence \eqref{eq:spectral_decay} holds if and only if \eqref{eq:smallnoiseassumption}
holds.

For instance, $B:=K$ is an example which yields relation \eqref{eq:spectral_decay},
as $K$ is of trace class in $H$ by Mercer's theorem, see also \citep[p. 91, equation (6.55)]{Bressloff}.
\end{ex}

We can compare the solutions to (\ref{eq:spde-nongradient}) and (\ref{eq:spde-gradient})
in the following way. The mentioned invariance result is postponed
to the next subsection. \begin{prop} \label{prop:compare-solutions}Suppose
that $u_{0}\in H_{-1}$. Under Assumptions 1--4, a solution to (\ref{eq:spde-nongradient})
in the sense of Definition \ref{def:sln1} is a solution to (\ref{eq:spde-gradient})
in the sense of Definition \ref{def:sln2}. \end{prop}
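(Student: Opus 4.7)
My strategy is to recognize that \eqref{eq:spde-nongradient} and \eqref{eq:spde-gradient} encode the same $H$-valued process---by Lemma~\ref{lem:energy_H-1}, $\txtD\Theta(u)$ is just the $H_{-1}$-Riesz representative of $\alpha u-KF(u)$---so it suffices to translate the scalar identity of Definition~\ref{def:sln1} into the one required by Definition~\ref{def:sln2} by a change of pairing, and then close the identity on $H_{-1}$ by density.

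First I would identify a dense subset of $(H_{-1},\|\cdot\|_{-1})$ that admits convenient preimages under $K$ in $H$. The natural choice is $H_1:=K(S)$: for $h=\sum_i c_i e_i\in H_{-1}$ (so $\sum_i c_i^2/\lambda_i<\infty$), the partial sums $h_n:=K\!\bigl(\sum_{i\le n}(c_i/\lambda_i)e_i\bigr)\in K(S)$ satisfy $\|h-h_n\|_{-1}^2=\sum_{i>n}c_i^2/\lambda_i\to 0$, and additionally $K(S)\subset H_{-1}$ since $\|Kg\|_{-1}^2=\sum_i\lambda_i(g,e_i)_H^2\le\|K\|_{L(H)}\|g\|_H^2$.

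Next, for $h\in K(S)$ I set $g:=K^{-1}h\in S\subset H$ and substitute $v:=g$ into Definition~\ref{def:sln1}. Using the self-adjointness of $K^{1/2}$, the pseudoinverse identity $K^{1/2}K^{-1/2}=P_S$ (orthogonal projection onto $S$), and the fact that $g\in S$, each pairing rewrites as
\[
(\xi,g)_H=(K^{-1/2}\xi,K^{1/2}g)_H=(K^{-1/2}\xi,K^{-1/2}h)_H=(\xi,h)_{-1}\qquad(\xi\in H_{-1}),
\]
and the analogous computation together with the self-adjointness of $P_S$ gives $(g,BW_t)_H=(h,BW_t)_{-1}$. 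Applying this to $\xi\in\{U_t,\,u_0,\,-\alpha U_s,\,KF(U_s)\}$ (each of which lies in $H_{-1}$: $u_0$ by hypothesis, $U_t$ and $U_s$ by the invariance of $H_{-1}$, and $KF(U_s)$ because $K$ maps $H$ into $H_{-1}$) and combining the two drift contributions by Lemma~\ref{lem:energy_H-1} identifies the integrand as $-\txtD\Theta(U_s)h$. A density extension from $h\in K(S)$ to all $h\in H_{-1}$, using continuity in $\|\cdot\|_{-1}$ of each of the four maps $h\mapsto(U_t,h)_{-1}$, $h\mapsto(u_0,h)_{-1}$, $h\mapsto\int_0^t\txtD\Theta(U_s)h\,\txtd s$, $h\mapsto(h,BW_t)_{-1}$, finishes the verification.

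The main obstacle is conceptual rather than computational: the translation presupposes that $U_t\in H_{-1}$ almost surely (so that $K^{-1/2}U_t\in H$) and that $BW_t$ makes sense as an $H_{-1}$-valued process (so that $(h,BW_t)_{-1}$ is a genuine real-valued stochastic integral). Both points are precisely the content of the invariance statement that the authors postpone to the next subsection and which rests on Assumption~\ref{assu:5}; once granted, Proposition~\ref{prop:compare-solutions} reduces to the change-of-pairing calculation above.
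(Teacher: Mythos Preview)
Your argument is correct and follows the same route as the paper: invoke the forthcoming invariance result to place $U$ in $H_{-1}$, then for $h=Kg$ test \eqref{eq:spde-nongradient} against $v=g=K^{-1}h$ and rewrite each $H$-pairing as an $H_{-1}$-pairing, invoking Lemma~\ref{lem:energy_H-1} for the drift. The only difference is that the paper asserts directly ``for any $h\in H_{-1}$ there exists $v\in H$ with $Kv=h$'' and proceeds without a density step, whereas you (more carefully) first work on $K(S)$ and then close by density in $\|\cdot\|_{-1}$; since $K$ is compact with eigenvalues accumulating at zero, $K(S)$ is in general only dense in $H_{-1}$, so your extra step is the rigorous completion of the paper's computation rather than a different method.
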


\begin{proof} Let $U$ be a solution to~(\ref{eq:spde-nongradient}).
By Theorem \ref{thm:invariance} below, we see that $u_{0}\in H_{-1}$
and the assumed relation \eqref{eq:smallnoiseassumption} imply that
\[
U\in L^{2}(\Omega;L^{\infty}([0,T];H_{-1})).
\]
For any $h\in H_{-1}$ there exists $v\in H$ with $Kv=h$. We see
that 
\[
(U_{t},v)_{H}=(u_{0},v)_{H}+\int_{0}^{t}(-\alpha U_{s}+KF(U_{s}),v)_{H}\,\txtd s+\eps\,(v,BW_{t})_{H},
\]
is equivalent to 
\[\begin{split}
&(U_{t},K^{-1}h)_{H}\\
=&(u_{0},K^{-1}h)_{H}+\int_{0}^{t}(-\alpha U_{s}+KF(U_{s}),K^{-1}h)_{H}\,\txtd s+\eps\,(K^{-1}h,BW_{t})_{H}.
\end{split}\]
Thus, just using the definitions of the nonlocal norms, we get 
\[
(U_{t},h)_{-1}=(u_{0},h)_{-1}+\int_{0}^{t}(-\alpha U_{s}+KF(U_{s}),h)_{-1}\,\txtd s+\eps\,(h,BW_{t})_{-1}.
\]
Finally, the representation of $\txtD\Theta(\cdot)$, proved above,
yields 
\[
(U_{t},h)_{-1}=(u_{0},h)_{-1}-\int_{0}^{t}\txtD\Theta(U_{s})h\,\txtd s+\eps\,(h,BW_{t})_{-1}.
\]
\end{proof} \begin{cor} Suppose that $u_{0}\in H_{-1}$. Assume
that Assumptions 1--4 hold. Then a solution to (\ref{eq:spde-gradient})
in the sense of Definition \ref{def:sln2} is a solution to (\ref{eq:spde-nongradient})
in the sense of Definition \ref{def:sln1}. \end{cor}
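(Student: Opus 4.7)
The plan is to reverse the computation carried out in the proof of Proposition \ref{prop:compare-solutions}. Given a solution $V$ of \eqref{eq:spde-gradient} in the sense of Definition \ref{def:sln2}, the embedding $H_{-1} \hookrightarrow H$ supplied by \eqref{eq:op-bound} immediately yields $V \in L^{2}(\Omega \times [0,T]; H)$ with $\P$-a.s.\ Bochner-integrable trajectories, and predictability in $H$ is inherited from $H_{-1}$ by continuity of the embedding. Hence the ambient regularity demanded by Definition \ref{def:sln1} is automatic, and only the weak identity remains to be verified.

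The substantive ingredient is the duality identity
\[
(u, K v)_{-1} = (K^{-\frac{1}{2}} u, K^{\frac{1}{2}} v)_H = (u, v)_H,
\]
valid for every $u \in H_{-1}$ and every $v \in H$. The first equality uses $K^{-\frac{1}{2}} K v = K^{\frac{1}{2}} v$ (which follows from $K = K^{\frac{1}{2}} K^{\frac{1}{2}}$ together with the Moore-Penrose relation $K^{-\frac{1}{2}} K^{\frac{1}{2}} = P_S$ applied to $K^{\frac{1}{2}} v \in S$), and the second uses self-adjointness of $K^{\frac{1}{2}}$ on $H$ combined with $K^{\frac{1}{2}} K^{-\frac{1}{2}} = P_S$ and $u \in H_{-1} \subset S$. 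For any $v \in H$, the vector $h := K v$ satisfies $\|h\|_{-1}^{2} = \sum_{i} \lambda_{i} (v, e_{i})_{H}^{2} \le \|K\|_{L(H)} \|v\|_{H}^{2} < \infty$, so $h \in H_{-1}$ is admissible as a test vector in Definition \ref{def:sln2}.

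Inserting $h = K v$ into the weak identity of \eqref{eq:spde-gradient} and applying the duality identity termwise, I would convert each of the four contributions to its $H$-counterpart: $(V_{t}, K v)_{-1} = (V_{t}, v)_H$ and $(u_{0}, K v)_{-1} = (u_{0}, v)_H$ follow directly (using $V_{t}, u_{0} \in H_{-1}$); for the drift, Lemma \ref{lem:energy_H-1} gives $\txtD \Theta(V_{s})(K v) = \alpha (V_{s}, K v)_{-1} - (K F(V_{s}), K v)_{-1}$, which becomes $(\alpha V_{s} - K F(V_{s}), v)_H$ once the duality is applied to both $V_{s}$ and $K F(V_{s}) \in H_{-1}$ (the latter having finite $(-1)$-norm since $F(V_{s}) \in H$); and the noise pairing transforms as $(K v, B W_{t})_{-1} = (B W_{t}, v)_H = (v, B W_{t})_H$. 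Summing the four transformed terms reproduces the testing identity of Definition \ref{def:sln1}.

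The only step resting on a nontrivial hypothesis is the noise identification, since invoking the duality with $u = B W_{t}$ requires $B W_{t} \in H_{-1}$ almost surely. This is precisely what Assumption \ref{assu:5} delivers via $B \in L_{2}(H, H_{-1})$, which is among the four assumptions listed in the corollary; under it, $\{B W_{t}\}_{t \ge 0}$ is a genuine $H_{-1}$-valued $B^{2}$-Wiener process and the noise pairing identification is fully justified. The remaining checks—Bochner integrability of $-\alpha V_{s} + K F(V_{s})$ in $H$ (via the Lipschitz property of $F$ and boundedness of $K$ on $H$), measurability, and the initial value $V_{0} = u_{0}$—are then routine.
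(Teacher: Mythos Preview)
Your proposal is correct and follows essentially the same approach as the paper, which simply says the chain of equations in the proof of Proposition \ref{prop:compare-solutions} is to be read backwards. You have spelled out this reversal in more detail than the paper does, including the duality identity $(u,Kv)_{-1}=(u,v)_H$ and the use of Assumption \ref{assu:5} to place $BW_t$ in $H_{-1}$.
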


\begin{proof} This is proved by reading the chain of equations in
the proof of Proposition \ref{prop:compare-solutions} backwards.
\end{proof} Although we now know that solutions are indeed equivalent,
we have to show that they stay in the nonlocal space $H_{-1}$ if
they start in $H_{-1}$ as our gradient flow formulation is only valid
for this space.

\subsection{\label{subsec:Galerkin-approximation} Galerkin approximation and
invariance of $H_{-1}$ under the flow}

The idea to show invariance is to use a finite-system approximation
idea in combination with It\^{o}'s formula. Consider $u_{0}\in H$
and let $u^{i,N}$, $N\in\N$, $1\le i\le N$ be a solution to the
$N$-dimensional Galerkin system of stochastic ordinary differential
equations 
\begin{equation}
\txtd u_{t}^{i,N}=[-\alpha u_{t}^{i,N}+(KF)^{i,N}(u_{t}^{1,N},\ldots,u_{t}^{N,N})]\,\txtd t+\eps(Be_{i},\txtd W_{t})_{H},\quad u_{0}^{i,N}=u_{0},\label{eq:Galerkin}
\end{equation}
for each $1\le i\le N$. Here, $(KF)^{i,N}$ are given by 
\[\begin{split}
&(KF)^{i,N}(u_{t}^{1,N},\ldots,u_{t}^{N,N})\\
:=&\int_{\Bcal}f\left(\sum_{j=1}^{N}u^{j,N}(x)e_{j}(x)\right)\left(\int_{\Bcal}J(x-y)e_{i}(y)\,\txtd y\right)\,\txtd x,
\end{split}\]
compare with \citep[Section 8]{KuehnRiedler14}. Note that we can
replace each of the noise terms $\eps(Be_{i},\txtd W_{t})_{H}$ by
multiples of independent real-valued Brownian motions $\eps b_{i}\,\txtd\beta_{t}^{i}$
if the cylindrical Wiener process $W$ is modeled with respect to
the orthonormal system $\{e_{i}\}$ and if $B$ and $K$ are codiagonal,
see Example \ref{ex:jointly-diagonal}. As we do not want to assume
this for the sake of generality, we shall accept the constraint that
the modes of the noise may be probabilistically coupled (solutions
to the Galerkin system exist nevertheless). Let $U$ be a solution
to~(\ref{eq:spde-nongradient}). The $N$th Galerkin approximation
$U^{N}$ to $U$ is defined by 
\begin{equation}
U_{t}^{N}:=\sum_{i=1}^{N}u_{t}^{i,N}e_{i}.\label{eq:galerkin-sln}
\end{equation}
The following theorem can be found in \citep[Theorem 8.1]{KuehnRiedler14}.
\begin{thm} \label{thm:L2-convergence} For every $T>0$, it holds
that 
\[
\lim_{N\to\infty}\left[\sup_{t\in[0,T]}\|U_{t}-U_{t}^{N}\|_{H}\right]=0\quad\P\text{-a.s.}
\]
\end{thm}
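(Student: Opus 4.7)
The plan is to recast the Galerkin system \eqref{eq:Galerkin} as a projected SPDE in $H$, apply It\^o's formula together with Gronwall's lemma to obtain mean-square convergence, and then upgrade to $\P$-a.s. uniform convergence via Burkholder-Davis-Gundy and a Borel-Cantelli argument along a subsequence.

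First I would introduce $P_N$, the orthogonal projection of $H$ onto $H_N := \operatorname{span}\{e_1,\ldots,e_N\}$. Because each $e_i$ is an eigenfunction of the self-adjoint operator $K$, $P_N$ commutes with $K$, and a direct computation using $(KF)^{i,N} = (K F(U^N),e_i)_H$ shows that $U_t^N$ defined by~\eqref{eq:galerkin-sln} satisfies
\[
\txtd U_t^N = \bigl[-\alpha U_t^N + K P_N F(U_t^N)\bigr]\txtd t + \eps P_N B\,\txtd W_t, \quad U_0^N = P_N u_0.
\]
Setting $R_t^N := U_t - U_t^N$ and subtracting from~\eqref{eq:spde-nongradient}, the drift difference decomposes as $(I-P_N) K F(U_t) + K P_N\bigl(F(U_t) - F(U_t^N)\bigr)$, and the stochastic increment becomes $\eps(I-P_N) B\,\txtd W_t$, cleanly separating the genuine projection error from the nonlinear feedback.

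Next I would apply It\^o's formula to $\|R_t^N\|_H^2$. Using Cauchy-Schwarz, Young's inequality, the bound $\|K P_N\|_{L(H)} \le \|K\|_{L(H)}$, and the global Lipschitz property of $F$, combined with BDG on the resulting martingale term, a standard Gronwall estimate yields
\[
\E\Bigl[\sup_{t\in[0,T]} \|R_t^N\|_H^2\Bigr] \le C_T \Bigl( \|(I-P_N) u_0\|_H^2 + \E\!\int_0^T \|(I-P_N) K F(U_s)\|_H^2\,\txtd s + \eps^2 \|(I-P_N) B\|_{L_2(H)}^2 \Bigr).
\]
Each term on the right vanishes as $N \to \infty$: the first two because $u_0$ (restricted to $S$) and $K F(U_s)$ lie in $S$, on which $P_N \to \Id$ strongly (with dominated convergence for the time integral, since $\sup_{s\in[0,T]} \|KF(U_s)\|_H < \infty$ $\P$-a.s. by the Lipschitz bound on $F$ and $U \in C([0,T];H)$), and the third because $B$ is Hilbert-Schmidt by Assumption \ref{assu:HS-op}. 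To upgrade this mean-square convergence to $\P$-almost sure uniform convergence, I would select a subsequence $N_k$ along which the above right-hand side decays faster than $k^{-2}$, apply Chebyshev and Borel-Cantelli to obtain $\sup_{t\in[0,T]}\|R_t^{N_k}\|_H \to 0$ $\P$-a.s., and then pass from the subsequence to the full sequence via a diagonal argument combined with convergence in probability of $\{R^N\}$.

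The main obstacle is the passage from $L^2(\Omega)$ convergence to pathwise uniform-in-time convergence: BDG is essential to control the supremum of the stochastic convolution driven by $(I-P_N)B\,\txtd W$, and one must leverage the Hilbert-Schmidt summability of $B$ to obtain a rate compatible with a summable Borel-Cantelli bound. A secondary technical point is that when $\ker K \neq \{0\}$ and $u_0 \notin S$, the basis $\{e_i\}$ does not span $H$, so $P_N u_0 \not\to u_0$; in that case the Galerkin system \eqref{eq:Galerkin} must be augmented by a basis of $\ker K$ (on which the equation reduces to the linear Ornstein-Uhlenbeck dynamics $\txtd u = -\alpha u\,\txtd t + \eps B\,\txtd W$) for the stated convergence to hold.
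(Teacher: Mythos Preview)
The paper does not prove this statement itself; it cites \citep[Theorem~8.1]{KuehnRiedler14}. Your mean-square estimate is correct: the projected-SPDE reformulation, the drift decomposition, It\^o plus Gronwall, and BDG on the martingale combine to give $\E\bigl[\sup_{t\in[0,T]}\|R_t^N\|_H^2\bigr]\to 0$. The gap is the upgrade to $\P$-a.s.\ convergence. Almost-sure convergence along a subsequence together with convergence in probability along the full sequence does \emph{not} imply almost-sure convergence along the full sequence, and no diagonal argument repairs this. To run Borel--Cantelli on the full sequence you would need a summable decay rate for the right-hand side, but none of $\|(I-P_N)u_0\|_H^2$, $\E\int_0^T\|(I-P_N)KF(U_s)\|_H^2\,\txtd s$, or $\|(I-P_N)B\|_{L_2(H)}^2$ carries such a rate under the stated hypotheses.

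A route that closes the gap is a purely pathwise Gronwall argument via the mild formulation. Set $Z_t:=\eps\int_0^t e^{-\alpha(t-s)}B\,\txtd W_s$; then $U,Z\in C([0,T];H)$ $\P$-a.s., so for fixed $\omega$ the sets $\{KF(U_s(\omega)):s\in[0,T]\}$ and $\{Z_s(\omega):s\in[0,T]\}$ are compact in $H$. Since $(I-P_N)\to 0$ uniformly on compact subsets of $S$, a pathwise Gronwall on the mild difference equation yields $\sup_{t\in[0,T]}\|R_t^N\|_H\to 0$ directly, with no probabilistic limit step at the end. Your closing remark about $\ker K\ne\{0\}$ is correct and applies to this route as well: one needs $u_0\in S$ and $Z_s\in S$ (the latter e.g.\ via $B(H)\subset S$) for either argument to succeed, so for general $u_0\in H$ the Galerkin system \eqref{eq:Galerkin} must indeed be augmented on $\ker K$ as you note.
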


Having finished our preparations we can now show that if we choose
the initial datum $u_{0}\in H_{-1}$, we remain in the subspace $H_{-1}$.
This is a consequence of the following invariance result. \begin{thm}
\label{thm:invariance} Let $u_{0}\in H_{-1}$. Assume that Assumptions
1--4 hold. Then $U\in L^{2}(\Omega\times[0,T];H_{-1})$ and there
exists a constant $C>0$ such that 
\begin{equation}
\E\left[\sup_{t\in[0,T]}\|U_{t}\|_{-1}^{2}\right]\le C\|u_{0}\|_{-1}^{2}+\eps^{2}C,\label{eq:H-minus-1-bound}
\end{equation}
where $\eps\ge0$ is as in (\ref{eq:spde-nongradient}). \end{thm}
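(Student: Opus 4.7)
The plan is the one anticipated just before the statement: derive \eqref{eq:H-minus-1-bound} first for the Galerkin approximants $U^N$ from \eqref{eq:galerkin-sln}, uniformly in $N$, and then pass to the limit with the help of Theorem~\ref{thm:L2-convergence}. Let $P_N$ denote the spectral projection of $K$ onto $V_N := \operatorname{span}\{e_1,\ldots,e_N\}$. Since the eigenfunctions $\{e_i\}$ are orthogonal in \emph{both} the $H$- and the $H_{-1}$-inner products, $P_N$ is simultaneously the $H$- and the $H_{-1}$-orthogonal projection onto $V_N$, commutes with $K$, $K^{1/2}$, $K^{-1/2}$ on $S$, and satisfies $\|P_N g\|_{-1} \le \|g\|_{-1}$. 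The Galerkin system \eqref{eq:Galerkin} becomes
\begin{equation*}
\txtd U_t^N = [-\alpha U_t^N + P_N K F(U_t^N)]\,\txtd t + \eps P_N B\,\txtd W_t, \qquad U_0^N = P_N u_0 \in V_N.
\end{equation*}
The crucial algebraic observation is that on $V_N$ the apparently singular $H_{-1}$-cross term collapses to an ordinary $L^2$-pairing: using $P_N K = K P_N$, $K^{-1/2}K = K^{1/2}$ on $S$ and $P_N U_t^N = U_t^N$,
\begin{equation*}
(U_t^N, P_N K F(U_t^N))_{-1}
= (K^{-1/2} U_t^N, K^{1/2} P_N F(U_t^N))_H
= (U_t^N, F(U_t^N))_H.
\end{equation*}

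Applying a finite-dimensional It\^{o} formula to $\|\cdot\|_{-1}^2$ (legitimate since $U^N$ is $V_N$-valued) then yields
\begin{equation*}
\|U_t^N\|_{-1}^2 + 2\alpha \int_0^t \|U_s^N\|_{-1}^2\,\txtd s
= \|U_0^N\|_{-1}^2 + 2\int_0^t (U_s^N, F(U_s^N))_H\,\txtd s
+ \eps^2 \int_0^t \|P_N B\|_{L_2(H,H_{-1})}^2\,\txtd s + M_t^N,
\end{equation*}
where $M^N$ is a real martingale. The trace term is bounded uniformly in $N$ by $\eps^2 T\|B\|_{L_2(H,H_{-1})}^2$ via Assumption~\ref{assu:5} together with the $H_{-1}$-contractivity of $P_N$; the cross term is controlled via \eqref{eq:Lip} and the embedding \eqref{eq:op-bound} by $|(U_s^N, F(U_s^N))_H| \le C_1 + C_2 \|U_s^N\|_{-1}^2$; and the Burkholder--Davis--Gundy inequality combined with Young's inequality absorbs $\E[\sup_{s\le t}|M_s^N|]$ into $\tfrac12 \E[\sup_{s\le t}\|U_s^N\|_{-1}^2]$ plus an additive $O(\eps^2)$ contribution. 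With $\|U_0^N\|_{-1} \le \|u_0\|_{-1}$, Gr\"{o}nwall's lemma then delivers \eqref{eq:H-minus-1-bound} uniformly in $N$.

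To pass to the limit, Fatou applied to the uniform bound gives $\liminf_N \sup_{t\le T}\|U_t^N\|_{-1}^2 < \infty$ $\P$-a.s.; on this event, for each $t$ one extracts a weakly convergent $H_{-1}$-subsequence of $\{U_t^N\}$, and the $\P$-a.s.\ convergence $U^N \to U$ in $C([0,T];H)$ from Theorem~\ref{thm:L2-convergence} forces the weak limit to coincide with $U_t$, so $U_t \in H_{-1}$ with $\|U_t\|_{-1}^2 \le \liminf_N \sup_s \|U_s^N\|_{-1}^2$. Taking supremum in $t$, expectation, and invoking Fatou once more transfers \eqref{eq:H-minus-1-bound} to $U$. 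The main technical obstacle is the cross-term identity displayed above: it is the concrete manifestation in the a priori estimate of the gradient structure constructed in Subsection~\ref{subsec:Gradients-in}, and it is precisely what renders the $K^{-1/2}$-weighted energy calculation closable despite the nominal loss of derivatives in the nonlocal norm; once it and Assumption~\ref{assu:5} are in hand, the rest is a standard BDG/Gr\"{o}nwall closure together with a Galerkin limit.
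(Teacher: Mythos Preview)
Your proof is correct and follows essentially the same route as the paper's: Galerkin approximation, It\^o's formula for $\|\cdot\|_{-1}^2$ together with the key identity $(U^N, P_N K F(U^N))_{-1} = (U^N, F(U^N))_H$, a BDG/Young/Gr\"onwall closure, and a compactness argument to pass to the limit. The only cosmetic differences are that the paper bounds the It\^o correction term via the second condition in Assumption~\ref{assu:5} rather than the first, and passes to the limit via weak$^\ast$ compactness in $L^2(\Omega;L^\infty([0,T];H_{-1}))$ instead of your pointwise Fatou/weak-$H_{-1}$ argument.
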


\begin{proof} Note that since $u_{0}\in H_{-1}$, we may assume that
$U^{N}\in C([0,T];H_{-1})$ $\P$-a.s., see the discussion by \citet[Sections 3 and 8]{KuehnRiedler14}.
Also, in this case, 
\[
(KF)^{i,N}(u_{t}^{1,N},\ldots,u_{t}^{N,N})=\lambda_{i}\int_{\Bcal}f\left(\sum_{j=1}^{N}u^{j,N}(x)e_{j}(x)\right)e_{i}(x)\,\txtd x.
\]
We also have to consider the truncated action of the kernel defined
by 
\[
K^{N}v:=\sum_{i=1}^{N}\lambda_{i}(e_{i},v)_{H}e_{i}.
\]
Applying It\^{o}'s formula (see \citep[Theorem 4.32]{DaPrZa:2nd})
for the functional $v\mapsto\|v\|_{-1}^{2}$ yields 
\begin{equation}
\begin{aligned}\|U_{t}^{N}\|_{-1}^{2}= & \|u_{0}\|_{-1}^{2}+2\int_{0}^{t}\left(-\alpha U_{s}^{N}+K^{N}F(U_{s}^{N}),U_{s}^{N}\right)_{-1}\,\txtd s\\
 & +2\eps\sum_{i=1}^{N}\int_{0}^{t}\left(U_{s}^{N},B^{i}e_{i}\,\txtd\beta_{s}^{i}\right)_{-1}+t\eps^{2}\sum_{i=1}^{N}\sum_{k=1}^{N}(Be_{i},e_{k})_{H}^{2}\lambda_{i}^{-1}.
\end{aligned}
\label{eq:solGal}
\end{equation}
Note that $K^{N}$ is a nonnegative definite operator. We see that
$\txtd t\otimes\P$-a.e., by the spectral decomposition of $U^{N}$
and $K^{-1}$ and orthogonality, 
\begin{equation}
\begin{aligned} & \left(-\alpha U^{N}+K^{N}F(U^{N}),U^{N}\right)_{-1}\\
= & -\alpha\|U^{N}\|_{-1}^{2}+\left(\sum_{j=1}^{N}\lambda_{j}\left(e_{j},F(U^{N})\right)_{H}e_{j},K^{-1}U^{N}\right)_{H}\\
= & -\alpha\|U^{N}\|_{-1}^{2}+\left(\sum_{j=1}^{N}\lambda_{j}\left(e_{j},F(U^{N})\right)_{H}e_{j},\sum_{k=1}^{N}\lambda_{k}^{-1}\left(U^{N},e_{k}\right)_{H}e_{k}\right)_{H}\\
= & -\alpha\|U^{N}\|_{-1}^{2}+\left(F(U^{N}),U^{N}\right)_{H}\\
\le & -\alpha\|U^{N}\|_{-1}^{2}+[|f(0)|+\operatorname{Lip}(f)]\|K\|_{L(H)}\|U^{N}\|_{-1}^{2},
\end{aligned}
\label{eq:ex-bound}
\end{equation}
where we have used (\ref{eq:Lip}) and (\ref{eq:op-bound}). To bound
$U_{t}^{N}$ uniformly in $t\in[0,T]$, we are first going to use the Burkholder-Davis-Gundy
inequality, the Young inequality, and Gronwall's lemma to show that
there exist constants $C:=[|f(0)|+\operatorname{Lip}(f)]\|K\|_{L(H)}>0$,
$\delta\in(0,1)$, $\delta<\frac{1}{2T}\txte^{2(\alpha-C)T}$ and
$\kappa(\delta)>0$, such that the following inequality holds, i.e., we estimate~(\ref{eq:solGal}) as
follows 
\begin{eqnarray*}
&&\E\left[\sup_{t\in[0,T]}\|U_{t}^{N}\|_{-1}^{2}\right]\\
& \le & \txte^{2(C-\alpha)T}\|u_{0}\|_{-1}^{2}+\txte^{2(C-\alpha)T}\delta\E\int_{0}^{T}\|U_{s}^{N}\|_{-1}^{2}\,\txtd s\\
 &  & +\txte^{2(C-\alpha)T}T\eps^{2}\kappa(\delta)\sum_{i=1}^{N}\|Be_{i}\|_{H_{-1}}^{2}\\
 &  & +\txte^{2(C-\alpha)T}T\eps^{2}\sum_{i=1}^{N}\sum_{k=1}^{N}(Be_{i},e_{k})_{H}^{2}\lambda_{i}^{-1}\\
 & \le & \txte^{2(C-\alpha)T}\|u_{0}\|_{-1}^{2}+\frac{1}{2}\E\left[\sup_{s\in[0,T]}\|U_{s}^{N}\|_{-1}^{2}\right]\\
 &  & +\txte^{2(C-\alpha)T}T\eps^{2}\kappa(\delta)\|B\|_{L_{2}(H,H_{-1})}^{2}\\
 &  & \qquad+\txte^{2(C-\alpha)T}T\eps^{2}\sum_{i=1}^{N}(Be_{i},Be_{i})_{H}\lambda_{i}^{-1},
\end{eqnarray*}
where we have used Bessel's inequality in the last step. Clearly,
noting that $\{\sqrt{\lambda_{i}}e_{i}\}_{i\in\N}$ is an orthonormal
basis for $H_{-1}$, we get for every $N\in\N$, 
\[
\sum_{i=1}^{N}(Be_{i},Be_{i})_{H}\lambda_{i}^{-1}=\sum_{i=1}^{N}\|\sqrt{{\lambda_{i}}}BK^{-1}e_{i}\|_{H}^{2}\le\|BK^{-1}\|_{L_{2}(H_{-1},H)}^{2},
\]
and hence by~\eqref{eq:smallnoiseassumption}, we get that there
exists another constant $C>0$ with 
\begin{equation}
\E\left[\sup_{t\in[0,T]}\|U_{t}^{N}\|_{-1}^{2}\right]\le C\|u_{0}\|_{-1}^{2}+\eps^{2}C\label{eq:H-minus-1-bound-N}
\end{equation}
for each $N\in\N$ and thus 
\[
\E\left[\sup_{N\in\N}\sup_{t\in[0,T]}\|U_{t}^{N}\|_{-1}^{2}\right]\le C\|u_{0}\|_{-1}^{2}+\eps^{2}C.
\]
Hence $\{U^{N}\}$ has a subsequence $\{U^{N_{k}}\}$ that converges
in the weak$^{\ast}$ sense in the Banach space $L^{2}(\Omega;L^{\infty}([0,T];H_{-1}))$
to some element $\tilde{U}\in L^{2}(\Omega;L^{\infty}([0,T];H_{-1}))$.
Since $H_{-1}$ is continuously embedded into $H$, we see by Theorem
\ref{thm:L2-convergence} that $\tilde{U}=U$ holds $\txtd t\,\otimes\,\P$-a.e..
We conclude the proof by noting that the $L^{2}(\Omega;L^{\infty}([0,T];H_{-1}))$-norm
is lower semi-continuous with respect to weak$^{\ast}$-convergence.
Hence, we can pass on to the limit $N\to\infty$ in (\ref{eq:H-minus-1-bound-N})
and get that 
\[
\E\left[\sup_{t\in[0,T]}\|U_{t}\|_{-1}^{2}\right]\le C\|u_{0}\|_{-1}^{2}+\eps^{2}C
\]
which indeed just means that the solution stays in $H_{-1}$. \end{proof}

\subsection{\label{subsec:Pathwise-regularity} Pathwise regularity of the flow}

In addition to the construction of the flow, it is often helpful,
sometimes even imperative, to have finer control over its regularity.
Let $V\in L^{2}(\Omega\times[0,T];H_{-1})$ be a solution to (\ref{eq:spde-gradient})
for the initial datum $u_{0}\in H_{-1}$. Consider the Doss-Sussmann
transformation 
\[
Y:=V-\eps BW,
\]
which is an established tool for certain classes of stochastic evolution
equations~\citep{CrauelFlandoli}. Then $Y$ satisfies the random
evolution equation 
\begin{equation}
\txtd Y_{t}=-\txtD\Theta(Y_{t}+\eps BW_{t})\,\txtd t,\quad Y_{0}=u_{0},\label{eq:transformed-spde}
\end{equation}
where $\Theta$ is defined as in \eqref{eq:theta}. If $Y$ is a pathwise
solution for (\ref{eq:transformed-spde}), we may transform back by
setting $V(\omega)=Y(\omega)+\eps BW(\omega)$ for each $\omega\in\Omega$.
Note that we may have to drop the assumption that the filtered probability
space is normal in order to obtain a collection of paths which fits
our purposes. We shall use the formulation of (\ref{eq:transformed-spde})
in order to prove additional regularity for the gradient flow $V$.
\begin{prop} Suppose that for fixed $\omega\in\Omega$, $t\mapsto BW_{t}(\omega)$
is \emph{c\`{a}dl\`{a}g}\footnote{That is, \emph{right-continuous with left limits}.}
in $H_{-1}$ and that 
\begin{equation}
BW(\omega)\in L^{2}([0,T];H_{-1})\label{eq:good-noise}
\end{equation}
Let $V$ be a solution to (\ref{eq:spde-gradient}) for the initial
datum $u_{0}\in H_{-1}$. Then the map $t\mapsto V_{t}$ is weakly
continuous in $H_{-1}$ and strongly right-continuous in $H_{-1}$.
\end{prop}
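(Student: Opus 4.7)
The plan is to pass to the Doss--Sussmann transformation $Y := V - \varepsilon BW$, which by \eqref{eq:transformed-spde} satisfies the pathwise random ODE
\[
\partial_t Y_t = -\alpha(Y_t + \varepsilon BW_t) + KF(Y_t + \varepsilon BW_t),
\]
obtained by identifying, via Lemma~\ref{lem:energy_H-1}, the Riesz representative of $\mathrm{D}\Theta(u)$ in $H_{-1}$ as $\alpha u - KF(u)$.

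The crucial step is a pathwise a priori bound on the right-hand side. By Theorem~\ref{thm:invariance} combined with Proposition~\ref{prop:compare-solutions}, for $\P$-a.e.\ $\omega$ we have $\sup_{t\in[0,T]}\|V_t(\omega)\|_{-1}<\infty$. Combined with the hypothesized c\`{a}dl\`{a}g (hence locally bounded) property of $BW(\omega)$ in $H_{-1}$, this forces $Y(\omega)$ to be bounded in $H_{-1}$ on $[0,T]$. To bound $KF(Y_s + \varepsilon BW_s)$ in $H_{-1}$ I would factor through $H$: the continuous embedding $H_{-1}\hookrightarrow H$ from \eqref{eq:op-bound} supplies an $H$-bound on $Y_s + \varepsilon BW_s$; the Lipschitz bound \eqref{eq:Lip} on $f$ then yields an $H$-bound on $F(Y_s + \varepsilon BW_s)$; and finally $\|Ku\|_{-1} = \|K^{1/2}u\|_H \le \|K^{1/2}\|_{L(H)}\|u\|_H$ transfers the bound into $H_{-1}$.

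Once the integrand is uniformly bounded in $H_{-1}$ on $[0,T]$ pathwise, the fundamental theorem of calculus for Bochner integrals forces $Y\colon [0,T]\to H_{-1}$ to be Lipschitz, in particular strongly continuous. Returning via $V_t = Y_t + \varepsilon BW_t$, the sum of a strongly continuous function and a c\`{a}dl\`{a}g function is c\`{a}dl\`{a}g in $H_{-1}$, which gives strong right-continuity of $V$. For weak continuity I would test the integral form of Definition~\ref{def:sln2} against any $h\in H_{-1}$: since $|\mathrm{D}\Theta(V_s)h|$ is uniformly bounded in $s\in[0,T]$ by the same estimates, the map $t\mapsto \int_0^t \mathrm{D}\Theta(V_s)h\,\txtd s$ is absolutely continuous, so $(V_t,h)_{-1} - \varepsilon(h,BW_t)_{-1}$ is continuous in $t$; this delivers weak continuity of $V$ up to the regularity already assumed for $BW$.

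The main obstacle is the gain-in-regularity needed for the nonlinear term: $F$ does not map $H_{-1}$ into $H_{-1}$ directly, so one must compose with the smoothing operator $K$ and exploit that, by the very construction of the nonlocal norm through $K^{-1/2}$, $K$ acts as a bounded map $H\to H_{-1}$. This is precisely where the tailored choice of the nonlocal space $H_{-1}$ pays off, and where Assumption~\ref{assu:new}(iii) is implicitly consumed.
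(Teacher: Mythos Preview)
Your argument is correct and takes a genuinely different route from the paper's. Both proofs pass through the Doss--Sussmann shift $Y=V-\eps BW$, but the paper never invokes Theorem~\ref{thm:invariance}; instead it re-derives an $H_{-1}$ bound on $Y$ from scratch by applying the chain rule to $t\mapsto\|Y_t\|_{-1}^2\,\txte^{-\kappa t}$ and choosing $\kappa$ to absorb the nonlinear term (the calculation mirroring \eqref{eq:ex-bound}). From this it obtains the monotonicity-type inequality $\|Y_t\|_{-1}^2\txte^{-Ct}\le\|Y_s\|_{-1}^2\txte^{-Cs}+\text{(noise integral)}$, and then runs the classical ``bounded in $H_{-1}$ + continuous in $H$ $\Rightarrow$ weakly continuous in $H_{-1}$'' argument, followed by ``$\limsup_n\|Y_{t_n}\|_{-1}\le\|Y_t\|_{-1}$ + weak convergence $\Rightarrow$ strong convergence'' to extract right-continuity.

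Your approach is more direct: once the a~priori $H_{-1}$ bound on $V$ is in hand (borrowed from Theorem~\ref{thm:invariance}), the drift $-\alpha V_s+KF(V_s)$ is uniformly bounded in $H_{-1}$, so the Bochner representation forces $Y$ to be Lipschitz in $H_{-1}$---a strictly stronger conclusion than the paper's weak continuity plus right-continuity of $Y$. The cost is that Theorem~\ref{thm:invariance} is an $L^2(\Omega)$ statement, so your bound on $\sup_t\|V_t\|_{-1}$ is available only $\P$-a.s., whereas the paper's energy estimate is purely pathwise and applies to the \emph{fixed} $\omega$ singled out in the hypothesis. If you want the proposition exactly as stated, you should replace the appeal to Theorem~\ref{thm:invariance} by the paper's chain-rule estimate (or simply note that the same computation, run pathwise on the transformed equation, yields the bound for the given $\omega$ under \eqref{eq:good-noise}).
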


\begin{proof} The strategy of the argument can, e.g., be found in
another setting in the work by~\citet[Proof of Theorem 2.6]{GT11}.
Since we work with a random evolution equation pathwise, we may apply
the chain rule~\citep[Section III.4]{Show} for the functional $v\mapsto\|v\|_{-1}^{2}$
such that for any $\kappa\ge0$, 
\begin{equation}
\begin{aligned} & \|Y_{t}\|_{-1}^{2}\txte^{-\kappa t}\\
= & \|u_{0}\|_{-1}^{2}-2\int_{0}^{t}\txte^{-\kappa s}(\txtD\Theta(Y_{s}+\eps BW_{s}),Y_{s})_{-1}\,\txtd s-\kappa\int_{0}^{t}\txte^{-\kappa s}\|Y_{s}\|_{-1}^{2}\,\txtd s\\
\le & \|u_{0}\|_{-1}^{2}-2\alpha\int_{0}^{t}\txte^{-\kappa s}\|Y_{s}+\eps BW_{s}\|_{-1}^{2}\,\txtd s\\
&+2\alpha\eps^{2}\int_{0}^{t}\txte^{-\kappa s}(Y_{s}+BW_{s},BW_{s})_{-1}\,\txtd s\\
 & \qquad+2[|f(0)|+\operatorname{Lip}(f)]\|K\|_{L(H)}\int_{0}^{t}\txte^{-\kappa s}\|Y_{s}+\eps BW_{s}\|_{-1}\|Y_{s}\|_{-1}\,\txtd s\\
 & \qquad\qquad-\kappa\int_{0}^{t}\txte^{-\kappa s}\|Y_{s}\|_{-1}^{2}\,\txtd s\\
\le & \|u_{0}\|_{-1}^{2}+\frac{\alpha\eps^{2}}{2}\int_{0}^{t}\txte^{-\kappa s}\|BW_{s}\|_{-1}^{2}\,\txtd s\\
 & +C\int_{0}^{t}\txte^{-\kappa s}\|Y_{s}\|_{-1}^{2}\,\txtd s+C\eps^{2}\int_{0}^{t}\txte^{-\kappa s}\|BW_{s}\|_{-1}^{2}\,\txtd s-\kappa\int_{0}^{t}\txte^{-\kappa s}\|Y_{s}\|_{-1}^{2}\,\txtd s;
\end{aligned}
\label{eq:bound-for-transformation}
\end{equation}
compare with (\ref{eq:ex-bound}). Choosing $\kappa=C$ yields 
\[
\|Y_{t}\|_{-1}^{2}\txte^{-Ct}\le\|u_{0}\|_{-1}^{2}+\eps^{2}\left(C+\frac{\alpha}{2}\right)\int_{0}^{t}\txte^{-Cs}\|BW_{s}\|_{-1}^{2}\,\txtd s,
\]
which is finite by (\ref{eq:good-noise}). As, in particular, 
\[
\sup_{t\in[0,T]}\|Y_{t}\|_{-1}^{2}<\infty
\]
and since $t\mapsto Y_{t}=V_{t}-BW_{t}$ is continuous in $H$, we
get by the continuous embedding $H_{-1}\hookrightarrow H$ that $t\mapsto Y_{t}$
is continuous with respect to the weak topology in $H_{-1}$. Note
that by the same calculation as in (\ref{eq:bound-for-transformation}),
we get that 
\[
\|Y_{t}\|_{-1}^{2}\txte^{-Ct}\le\|Y_{s}\|_{-1}^{2}\txte^{-Cs}+\eps^{2}\left(C+\frac{\alpha}{2}\right)\int_{s}^{t}\txte^{-Cr}\|BW_{r}\|_{-1}^{2}\,\txtd r,
\]
for $T\ge t>s\ge0$. Let $t_{n}\in[0,T]$ with $t_{n}\searrow t$.
We find that 
\[
\|Y_{t_{n}}\|_{-1}^{2}\txte^{-Ct_{n}}\le\|Y_{t}\|_{-1}^{2}\txte^{-Ct}+\eps^{2}\left(C+\frac{\alpha}{2}\right)\int_{t}^{t_{n}}\txte^{-Cs}\|BW_{s}\|_{-1}^{2}\,\txtd s
\]
and thus 
\[
\limsup_{n\to\infty}\|Y_{t_{n}}\|_{-1}^{2}\le\|Y_{t}\|_{-1}^{2}.
\]
By weak continuity in $H_{-1}$, we obtain that $Y_{t_{n}}\to Y_{t}$
in $H_{-1}$. Since $t\mapsto BW_{t}$ is right-continuous in $H_{-1}$,
we get that $V=Y+\eps BW$ is right-continuous in $H_{-1}$. \end{proof}

\section{\label{sec:Ergodicity} Ergodicity and unique invariant measures}

We are now going to exploit the gradient structure for the neural
field equation~(\ref{eq:introAmari}). In particular, we are going
to apply classical results, which allow for a direct computation of
the unique invariant measure of the ergodic semigroup of the (stochastic)
flow, see e.g.~\citep{Marcus:1974cw,Marcus:1978kn}. To apply these
results in our setting, let $V^{x}:[0,T]\times\Omega\to H_{-1}$ be
the unique solution to (\ref{eq:spde-gradient}) with initial datum
$V_{0}^{x}=x\in H_{-1}$. Assume $\eps>0$ here, which rules out the
situation of the deterministic PDE. Define the semigroup 
\[
P_{t}^{\eps}G(x):=\E[G(V_{t}^{x})]\quad G\in\Bcal_{b}(H_{-1}),\;x\in H_{-1},\;t\ge0,
\]
where, for a topological space $X$, $\Bcal_{b}(X)$ denotes the space
of bounded Borel measurable maps from $X$ to $\R$. \begin{defn}
We say that $\{P_{t}^{\eps}\}_{t\ge0}$ is symmetric with respect
to a probability measure $\nu$ on $(H_{-1},\Bcal(H_{-1}))$ if\footnote{For a topological space $X$, $\Bcal(X)$ denotes the \emph{Borel
$\sigma$-algebra}.} 
\[
\int_{H_{-1}}G_{1}(x)P_{t}^{\eps}G_{2}(x)\,\nu(\txtd x)=\int_{H_{-1}}G_{2}(x)P_{t}^{\eps}G_{1}(x)\,\nu(\txtd x)\quad\forall G_{1},G_{2}\in\Bcal_{b}(H_{-1})
\]
for every $t\ge0$. \end{defn}

\begin{rem} If $\{P_{t}^{\eps}\}_{t\ge0}$ is symmetric with respect
to some $\nu$, then $\nu$ is an \emph{invariant measure} for $\{P_{t}^{\eps}\}_{t\ge0}$,
that is, 
\[
(P_{t}^{\eps})^{\ast}\nu=\nu\quad\forall t\ge0,
\]
where 
\[
(P_{t}^{\eps})^{\ast}\nu(E):=\int_{H_{-1}}P_{t}^{\eps}1_{E}\,\nu(\txtd x),\quad E\in\Bcal(H_{-1})
\]
is the dual semigroup, see \citep{DPZ2} for details regarding these
standard semigroup constructions. \end{rem}

Note that we can write equation~(\ref{eq:spde-gradient}) also in
the form 
\[
\txtd X_{t}=(AX_{t}+\txtD\Phi(X_{t}))\,\txtd t+\eps B\,\txtd W_{t},\quad X_{0}=x\in H_{-1},
\]
where $A$ corresponds to the linear decay term.

Clearly, $A$ generates a $C_{0}$-contraction semigroup $\{S_{t}\}_{t\ge0}$
on $H_{-1}=(S,\|\cdot\|_{-1})$ which is extendable to a $C_{0}$-contraction
semigroup $\{S_{t}^{0}\}_{t\ge0}$ on the subset $(S,\|\cdot\|_{H})$
of $H$ by the spectral theorem. The infinitesimal generator of $\{S_{t}^{0}\}_{t\ge0}$
is denoted by $A^{0}$. By the definition of the space $H_{-1}$,
$A^{0}$ is a realization of $-\alpha K^{-1}$.

Also note that by our assumptions above, $A^{0}$ is self-adjoint
and on $(S,\|\cdot\|_{H})$ and nonpositive definite. Set 
\[
\Gamma_{\eps}:=\frac{\eps^{2}}{2}(-A^{0})^{-1}=\frac{2\eps^{2}}{\alpha}K,
\]
which is a trace class (i.e., a nuclear) operator. Denote by $\gamma_{\eps}\sim N(0,\Gamma_{\eps})$
the centered Gaussian measure with covariance operator $\Gamma_{\eps}$,
which is concentrated on $S$, see e.g. \citep{B1}. \begin{thm}
Assume that Assumptions 1--4 hold and that $B=K$. Then we get that
the semigroup $\{P_{t}^{\eps}\}_{t\ge0}$ is strongly Markovian\footnote{See \citep[Chapter 9]{DaPrZa:2nd} for the definition of this notion.}
and symmetric with respect to the measure 
\begin{equation}
\mu^{\eps}(\txtd z):=Z_{\eps}^{-1}\exp(2\eps^{-2}\Phi(z))\,\gamma_{\eps}(\txtd z),\label{eq:mu}
\end{equation}
where $Z_{\eps}:=\int_{S}\exp(2\eps^{-2}\Phi(z))\,\gamma_{\eps}(\txtd z)$.
In particular, $\mu^{\eps}$ is an invariant measure for $\{P_{t}^{\eps}\}_{t\ge0}$.\end{thm}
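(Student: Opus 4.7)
The plan is to recast \eqref{eq:spde-gradient}, with $B=K$, in the standard form
\[
\txtd X_t = (A X_t + D_{-1}\Phi(X_t))\,\txtd t + \eps K\,\txtd W_t
\]
on $H_{-1}$, where $A=-\alpha I_{H_{-1}}$ (extending to $A^{0}=-\alpha K^{-1}$ on $(S,\|\cdot\|_H)$) and the $H_{-1}$-gradient of $\Phi|_S$ equals $KF$ by Lemma \ref{lem:energy_H-1}. The strong Markov/Feller property of $\{P_t^\eps\}_{t\ge 0}$ would follow from the previously established well-posedness of \eqref{eq:spde-gradient}, the global Lipschitz continuity of $v\mapsto -\alpha v + KF(v)$, and standard arguments (\citep[Ch.~9]{DaPrZa:2nd}). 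With Markov-Feller in hand, the goal reduces to proving $\mu^\eps$-symmetry; invariance under $\{P_t^\eps\}$ then drops out for free by specializing the symmetry identity to $G_2\equiv 1$ and using $P_t^\eps 1 = 1$.

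Next, I would identify $\gamma_\eps$ as the unique invariant (and reversible) Gaussian measure for the purely linear OU semigroup $\txtd Z_t = AZ_t\,\txtd t + \eps K\,\txtd W_t$. Since $A^0$ and $K$ share the eigenbasis $\{e_i\}$, the Lyapunov equation is solved explicitly and yields the stated $\Gamma_\eps = \tfrac{\eps^2}{2}(-A^{0})^{-1}$, a trace-class operator on $(S,\|\cdot\|_H)$. Before invoking \eqref{eq:mu}, I would verify $Z_\eps<\infty$: the estimate $|\vp(r)|\le \operatorname{Lip}(f)r^2 + |f(0)||r| + |\vp(0)|$ derived in Lemma \ref{lem:energy_L2} gives $\Phi(u)\le C_1 + C_2\|u\|_H^2$, so Fernique's theorem for $\gamma_\eps$ yields exponential integrability of $2\Phi/\eps^2$ in the admissible parameter regime (in the typical neural-field setting where $f$ is also bounded, $\vp$ has linear growth and the bound is immediate). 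Hence $\mu^\eps$ is a well-defined probability measure on $(S,\Bcal(S))$, absolutely continuous with respect to $\gamma_\eps$.

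The core step is to verify that the associated Kolmogorov operator
\[
\Lcal\psi(v) = \tfrac{\eps^2}{2}\operatorname{Tr}\bigl(K^2 D^2\psi(v)\bigr) + \bigl(-\alpha v + KF(v),\,D\psi(v)\bigr)_{-1}
\]
is symmetric in $L^2(\mu^\eps)$ on a core of smooth cylinder functions built from the $H_{-1}$-eigenbasis $\{\sqrt{\lambda_i}\,e_i\}$. Applying Cameron--Martin integration by parts against $\gamma_\eps$ and using the identification $KF = D_{-1}\Phi$, the density $e^{2\Phi/\eps^2}$ precisely cancels the drift contribution arising from $KF$, and the computation collapses to the Dirichlet-form identity
\[
\int \psi_1\,\Lcal\psi_2\,\txtd \mu^\eps = -\tfrac{\eps^2}{2}\int \bigl(K D\psi_1,\,K D\psi_2\bigr)_{-1}\,\txtd \mu^\eps,
\]
which is manifestly symmetric in $\psi_1,\psi_2$. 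Symmetry of $\Lcal$ on a core then transfers to the semigroup by the classical results of \citet{Marcus:1974cw,Marcus:1978kn} (compare also \citep[Ch.~8]{DPZ2}), completing the proof. The main obstacle I anticipate is this last Gaussian IBP / cancellation step, rigorously executed: one must select a dense subalgebra of cylinder test functions that is invariant under $\Lcal$, justify differentiation of $\Phi$ under the integral in the nonlocal $H_{-1}$ topology (requiring some care, since the relevant Gaussian $\gamma_\eps$ lives on $(S,\|\cdot\|_H)$ rather than on $H_{-1}$ itself), and control the trace of $K^2 D^2\psi$ relative to $\Gamma_\eps$, all while keeping the exponential weight $e^{2\Phi/\eps^2}$ inside the integrability regime.
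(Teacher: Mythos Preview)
Your approach is correct, but it is considerably more hands-on than the paper's. The paper simply observes that, once the gradient structure $-\txtD\Theta = A\cdot + \txtD_{-1}\Phi$ on $H_{-1}$ has been established (Lemmas~\ref{lem:energy_L2}--\ref{lem:energy_H-1}), equation~\eqref{eq:spde-gradient} with $B=K$ fits exactly the framework of Zabczyk's 1989 theorem on symmetric solutions of semilinear stochastic equations; the only hypothesis that is checked separately (continuous dependence of the transition semigroup on the initial datum) is supplied by standard results from Da~Prato--Zabczyk. The strong Markov property and the symmetry with respect to $\mu^\eps$ are then read off directly from that reference.

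What you outline---identifying $\gamma_\eps$ as the OU invariant measure, performing Cameron--Martin integration by parts against $\gamma_\eps$ to show that the Kolmogorov operator is $L^2(\mu^\eps)$-symmetric on cylinder functions, and then lifting symmetry to the semigroup via Marcus-type arguments---is essentially the content of the proof of the Zabczyk result that the paper cites as a black box. So you are not taking a genuinely different route so much as unpacking the citation. This buys self-containedness and makes the role of the identity $KF=\txtD_{-1}\Phi$ in the cancellation completely transparent; the price is that you must rigorously handle the core problem and the trace estimates you flag at the end, all of which the paper sidesteps by delegating to the literature. Your caveat about Fernique and the integrability of $\exp(2\eps^{-2}\Phi)$ when $f$ is merely Lipschitz (so that $\vp$ has quadratic growth) is well spotted: for small $\eps$ this is not automatic, and the paper's proof, which does not discuss $Z_\eps<\infty$ explicitly, tacitly relies either on the bounded-$f$ case typical of neural field models or on the hypotheses of the cited theorem absorbing the issue.
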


\begin{proof} We have the correct structure for our equation enabling
us to apply \citep[Theorem 2]{Zabczyk:1989jz}. Note that ``Assumption
(H3)'' in \citep{Zabczyk:1989jz} follows from \citep[Theorem 7.14]{DaPrZa:2nd},
combined with \citep[Theorem 9.21]{DaPrZa:2nd}. We refer also to
\citep{DaPratoZabczyk1988} for an historical account. Compare also
with \citep[Theorem 5]{Mueck1995}. \end{proof} Of course, we
recognize the invariant measure~(\ref{eq:mu}) as the usual Gibbs
measure well-known from statistical physics, yet here it simply ``lives''
on a nonlocal space. However, its biophysical relevance is the same
as the classical one as it can be interpreted as the stationary probability
of a state. In fact, even more can be said about $\mu^{\eps}$. Under
the assumptions above, the measure $\mu^{\eps}$ is in fact unique
and the semigroup $\{P_{t}^{\eps}\}_{t\ge0}$ is ergodic. Recall that
$\delta_{x}(B):=1_{B}(x)$, $B\in\Bcal(H_{-1})$ denotes the \emph{Dirac
measure }and $\|\cdot\|_{\textup{TV}}$ denotes the \emph{total variation}
of a measure. \begin{thm} Assume that Assumptions 1--4 hold and
that $B=K$. Then the measure $\mu^{\eps}$, as defined in (\ref{eq:mu}),
is unique and the semigroup $\{P_{t}^{\eps}\}_{t\ge0}$ is ergodic
and \emph{strong Feller} in the following \emph{restricted sense}:
\[
\|(P_{t}^{\eps})^{\ast}\delta_{x_{n}}-(P_{t}^{\eps})^{\ast}\delta_{x_{0}}\|_{\textup{TV}}\to0
\]
as $n\to\infty$ for any sequence of points $x_{n}\in H_{-1}$, $n\in\N$,
$x_{0}\in H_{-1}$ with $\|x_{n}-x_{0}\|_{-1}\to0$ as $n\to\infty$
and for any $t>0$. \end{thm}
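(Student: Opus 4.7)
The plan is to reduce the assertion to a Doob--Khasminskii-type argument. The previous theorem already exhibits $\mu^\eps$ as an invariant measure for $\{P_t^\eps\}_{t\ge 0}$, so the two ingredients left to verify are: (i)~the restricted strong Feller property stated in the theorem, and (ii)~topological irreducibility of $\{P_t^\eps\}$ on $H_{-1}$. Once (i) and (ii) are in hand, the classical Doob theorem (see e.g.~\citep[Theorem 4.2.1]{DPZ2}) forces uniqueness of $\mu^\eps$ and ergodicity (in fact strong mixing) of the semigroup.

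For the strong Feller part I would first analyze the linear Ornstein--Uhlenbeck semigroup $\{R_t^\eps\}$ generated by $A^0=-\alpha K^{-1}$ with noise $\eps K\,\txtd W$. Under $B=K$ the contraction $S_t$ and the covariance $K$ co-diagonalize on $S$, so the linear covariance $Q_t:=\int_0^t S_r\eps^2K^2 S_r^{\ast}\,\txtd r$ can be written out mode-by-mode; straightforward eigenvalue estimates then verify the Da Prato--Zabczyk null-controllability condition $S_t(H_{-1})\subset Q_t^{1/2}(H_{-1})$ for every $t>0$, because the exponential decay of $S_t$ on each eigenmode $e_i$ dominates the polynomial decay of $Q_t^{1/2}$ as $\lambda_i\to 0$. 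By~\citep[Theorem 7.2.1]{DaPrZa:2nd} this yields strong Feller of $R_t^\eps$ on $H_{-1}$. Strong Feller then transfers from $R_t^\eps$ to $P_t^\eps$ via a Girsanov density argument of the type in~\citep[Chapter 10]{DaPrZa:2nd}: the Radon--Nikodym factor $\txtd P_t^\eps/\txtd R_t^\eps$ is essentially the exponential factor $\exp(2\eps^{-2}\Phi)$ already appearing in~(\ref{eq:mu}), and its integrability is controlled by the Lipschitz bound~(\ref{eq:Lip}) together with~(\ref{eq:op-bound}).

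For irreducibility I would use approximate controllability of the deterministic system $\dot z = -\alpha K^{-1} z + KF(z) + \eps K u$: because the range of $K$ is dense in $H_{-1}$ and the Lipschitz nonlinear drift is a bounded perturbation of a well-behaved linear semigroup, any point $x\in H_{-1}$ can be steered into any open ball $B_r(y)\subset H_{-1}$ in finite time by an appropriate $L^2([0,T];H_{-1})$-control; a Cameron--Martin argument then promotes this to $P_t^\eps(x, B_r(y))>0$. The main obstacle is that the generator $A^0$ is unbounded with diverging spectrum $\{\alpha/\lambda_i\}$, so both the null-controllability estimate and the Girsanov density require careful bookkeeping in the $H_{-1}$-spectral basis, and one must check that the output of the argument is precisely the \emph{restricted} form of strong Feller (sequential continuity with respect to $\|\cdot\|_{-1}$) stated in the theorem, rather than the stronger $H$-norm form, which need not hold when $K$ has infinitely many small eigenvalues. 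With (i) and (ii) established, the Doob theorem closes the argument.
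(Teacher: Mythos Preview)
Your overall architecture (strong Feller + irreducibility $\Rightarrow$ Doob $\Rightarrow$ uniqueness and ergodicity) is the right one, and it is in fact what Maslowski's theorem packages. The gap is in how you propose to obtain the strong Feller property.

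The linear part of the equation on $H_{-1}$ is $-\alpha\,\text{Id}$, not $-\alpha K^{-1}$. (The operator $A^0=-\alpha K^{-1}$ in the paper is an artefact of rewriting the invariant Gaussian covariance; it is \emph{not} the drift of the linearised flow.) Consequently the Ornstein--Uhlenbeck semigroup is the scalar $S_t=e^{-\alpha t}$, with no mode-dependent decay. For the noise $B=K$ one computes on $H_{-1}$ that $Q_t=\frac{\eps^2(1-e^{-2\alpha t})}{2\alpha}\,K$, so $Q_t^{1/2}(H_{-1})=K^{1/2}(H_{-1})\subsetneq H_{-1}$. The null-controllability condition $S_t(H_{-1})\subset Q_t^{1/2}(H_{-1})$ therefore \emph{fails}, and the OU process is not strong Feller on $H_{-1}$. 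Your sentence ``the exponential decay of $S_t$ on each eigenmode $e_i$ dominates the polynomial decay of $Q_t^{1/2}$'' is based on the wrong semigroup. If instead you keep the auxiliary OU with generator $-\alpha K^{-1}$, null-controllability does hold, but now the Girsanov comparison to $P_t^\eps$ requires absorbing the drift mismatch $\alpha(K^{-1}-\text{Id})X$ into the noise; this forces a density involving $\|K^{-2}X\|_H$, which is not integrable, so the transfer step collapses.

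The paper avoids this by citing Maslowski's Theorem~2.1 as a black box and only verifying his hypothesis (H4) via \citep[Theorems 7.14 and 9.21]{DaPrZa:2nd}. The point that makes Maslowski's argument go through here---and that your sketch misses---is that one should \emph{not} pass through the linear OU at all: because $B=K$, the full nonlinear drift $KF(\cdot)$ already lies in $\operatorname{Ran}(K)$ with $K^{-1}KF=F$ globally Lipschitz on $H$, so Girsanov can be applied directly between $P_t^\eps(x,\cdot)$ and $P_t^\eps(x',\cdot)$ (coupling two semilinear solutions) rather than between $P_t^\eps$ and an OU reference. That is what delivers the restricted strong Feller property without any null-controllability assumption on the linear part.
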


\begin{proof} See \citep[Theorem 2.1]{Maslowski:1989dp}, where ``Assumption
(H4)'' follows in \citep{Maslowski:1989dp} follows from \citep[Theorem 7.14]{DaPrZa:2nd},
combined with \citep[Theorem 9.21]{DaPrZa:2nd}. \end{proof}
Uniqueness of the invariant measure (\ref{eq:mu}) then follows from
its so-called \emph{asymptotic strong Feller property}, see \citep{Hairer:2006ib,Hairer:2011je}
for this notion. \begin{cor} Assume that Assumptions 1--4 hold and
that $B=K$. Then $\{P_{t}^{\eps}\}_{t\ge0}$ is asymptotic strong
Feller. \end{cor}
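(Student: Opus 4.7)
The proof plan is to derive the asymptotic strong Feller property of \citet{Hairer:2006ib,Hairer:2011je} directly from the restricted strong Feller property established in the preceding theorem, which is a formally stronger notion. Recall that, by definition, $\{P_{t}^{\eps}\}_{t\ge0}$ is \emph{asymptotic strong Feller} at $x\in H_{-1}$ if there exist a totally separating system of pseudo-metrics $\{d_n\}_{n\in\N}$ on $H_{-1}$ and a sequence $t_n>0$ such that
\[
\inf_{U\in\Ucal_x}\limsup_{n\to\infty}\sup_{y\in U}\|(P_{t_n}^{\eps})^{\ast}\delta_x-(P_{t_n}^{\eps})^{\ast}\delta_y\|_{d_n}=0,
\]
where $\Ucal_x$ denotes the family of open neighborhoods of $x$ and $\|\cdot\|_{d_n}$ is the Kantorovich--Wasserstein dual norm associated with $d_n$.

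The first step is to propose the bounded family $d_n(x,y):=1\wedge n\|x-y\|_{-1}$ on $H_{-1}$ as a totally separating system (clearly $d_n(x,x)=0$ and $d_n(x,y)\to 1$ as $n\to\infty$ whenever $x\ne y$), together with a constant sequence $t_n\equiv t$ for some arbitrarily fixed $t>0$. Since $d_n\le 1$, any function that is $1$-Lipschitz with respect to $d_n$ may be shifted to have sup-norm at most $1/2$ without affecting integration against a signed probability measure; by Kantorovich--Rubinstein duality this yields the uniform domination
\[
\|\mu-\nu\|_{d_n}\le\tfrac{1}{2}\|\mu-\nu\|_{\textup{TV}},\qquad n\in\N,
\]
for any two probability measures $\mu,\nu$ on $(H_{-1},\Bcal(H_{-1}))$.

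The second step is to invoke the previous theorem: the map $y\mapsto (P_t^{\eps})^{\ast}\delta_y$ is sequentially continuous from $H_{-1}$ into the space of probability measures equipped with the total variation norm. Since $H_{-1}$ is a metric space, this is equivalent to topological continuity at every point $x\in H_{-1}$, so that for each $\eta>0$ one can find $U\in\Ucal_x$ with $\sup_{y\in U}\|(P_t^{\eps})^{\ast}\delta_x-(P_t^{\eps})^{\ast}\delta_y\|_{\textup{TV}}\le\eta$. Combining this with the domination above gives
\[
\inf_{U\in\Ucal_x}\sup_{y\in U}\|(P_t^{\eps})^{\ast}\delta_x-(P_t^{\eps})^{\ast}\delta_y\|_{d_n}=0
\]
for every $n\in\N$, which in turn forces the $\inf$--$\limsup$--$\sup$ in the definition of asymptotic strong Feller to vanish at each $x\in H_{-1}$.

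No genuine obstacle is expected in this argument; it is essentially a translation between two definitions, reflecting the fact that the restricted strong Feller property in total variation is strictly stronger than asymptotic strong Feller. The only points that require a moment of thought are the choice of the totally separating family, the uniform bound $\|\cdot\|_{d_n}\le \tfrac{1}{2}\|\cdot\|_{\textup{TV}}$, and the passage from sequential to topological continuity in the metric space $H_{-1}$, all of which are routine.
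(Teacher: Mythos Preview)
Your argument is correct and follows the same route as the paper, which simply cites \citep[Remark~3.9]{Hairer:2006ib}; that remark records exactly the implication you prove, namely that continuity of $y\mapsto (P_t^{\eps})^{\ast}\delta_y$ in total variation already forces the asymptotic strong Feller property, and your proof spells out the routine verification behind that citation.
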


\begin{proof} See \citep[Remark 3.9]{Hairer:2006ib}. \end{proof}

\begin{rem}

The requirement that $B=K$ is not necessary for the existence of
invariant measures. We have assumed this condition in order to get
an explicit and simple representation for the invariant measure. In
a finite dimensional context, this observation goes back to \citet{Kolmogorov}.
In fact, in order to obtain a symmetrizing Gibbs-type representation
for the invariant measure as in \eqref{eq:mu}, it is necessary and
sufficient to require $B=K$, cf. the exposition by \citet{Mueck1995}.
By an application Krylov-Bogoliubov's method, we note that under Assumptions
1--4, there always exists an invariant measure, see e.g. \citep{ESSARHIR2008,ESSTvG}.

\end{rem}

Under additional monotonicity and weak dissipativity assumptions on
$f$ one might even obtain decay estimates on invariant measures as
e.g.~in the works by \citet{BDP06,LiuToe1}. However, these steps
are beyond the basic framework we develop here.

\section{Examples of nonnegative definite kernels}

\label{sec:Examples}

To illustrate the setting, we also have to provide several concrete
examples. The key restriction that is required for the gradient structure
setup is the nonnegative definiteness assumption for the kernel. Let
us recall the following useful generating function characterization
of positive definite kernels determined by a function $J$ as in Assumption
\ref{assu:new}. \begin{thm} A kernel of the form $(x,y)\mapsto J(x-y)$,
$x,y\in\R^{d}$, for some function $J:\R^{d}\to\R$, is nonnegative
definite in the sense of \eqref{eq:pos-defi-1} if and only if there
exists a finite nonnegative Borel measure $\hat{\sigma}$ on $(\R^{d},\Bcal(\R^{d}))$
for which 
\begin{equation}
J(x)=\int_{\R^{d}}\txte^{\txti\langle y,x\rangle}\,\hat{\sigma}(\txtd y),\quad x\in\R^{d}.\label{eq:fourier}
\end{equation}
\end{thm}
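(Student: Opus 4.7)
The statement is essentially Bochner's classical theorem on $\R^{d}$, characterizing continuous positive definite functions as Fourier transforms of finite nonnegative Borel measures. The plan is to prove the two implications separately, the sufficiency being immediate and the necessity being the substantive content.

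For the sufficiency direction ($\Leftarrow$), I would substitute the integral representation \eqref{eq:fourier} into the quadratic form and apply Fubini. Using that the $c_i \in \R$ so that $\overline{\txte^{\txti\langle y,x_j\rangle}}=\txte^{-\txti\langle y,x_j\rangle}$, the sum collapses to the modulus squared of a trigonometric polynomial:
\[
\sum_{i,j=1}^{n} c_i c_j J(x_i-x_j)=\int_{\R^{d}}\Bigl|\sum_{i=1}^n c_i \txte^{\txti\langle y,x_i\rangle}\Bigr|^{2}\,\hat{\sigma}(\txtd y)\ge 0,
\]
since the integrand is pointwise nonnegative and $\hat{\sigma}$ is a nonnegative measure.

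For the necessity direction ($\Rightarrow$), the plan is to follow the standard Gaussian regularization proof of Bochner's theorem. Set $J_{\epsilon}:=J\ast\phi_{\epsilon}$ for a Gaussian mollifier $\phi_{\epsilon}$; then $J_{\epsilon}$ is continuous, integrable, and remains positive definite (convolution of positive definite functions against a positive definite $L^{1}$ kernel preserves positive definiteness). Its inverse Fourier transform $g_{\epsilon}$ is continuous, and positive definiteness of $J_{\epsilon}$ applied to Riemann sums approximating $\int_{\R^d} |\sum c_i \txte^{\txti\langle y,x_i\rangle}|^{2} g_{\epsilon}(y)\,\txtd y$ forces $g_{\epsilon}\ge 0$ pointwise. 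Setting $\hat{\sigma}_{\epsilon}(\txtd y):=g_{\epsilon}(y)\,\txtd y$ yields finite nonnegative measures with total mass $J_{\epsilon}(0)\to J(0)$. Continuity of $J$ at the origin together with the classical truncation estimate
\[
\hat{\sigma}_{\epsilon}(\{|y|\ge R\})\le C_{d} R^{d}\int_{|x|\le 1/R}\bigl(J_{\epsilon}(0)-\Re J_{\epsilon}(x)\bigr)\,\txtd x
\]
provides tightness, so Prokhorov's theorem furnishes a weak$^{*}$-limit $\hat{\sigma}$, and passing to the limit in the Fourier inversion formula $J_{\epsilon}(x)=\int \txte^{\txti\langle y,x\rangle} g_{\epsilon}(y)\,\txtd y$ produces \eqref{eq:fourier}.

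The main obstacle in a self-contained proof is the tightness step and the justification of the weak$^{*}$ passage to the limit; both rely only on continuity of $J$ at the origin, but require some care. In a paper of this style one would simply invoke Bochner's theorem from a standard reference (e.g. Rudin, \emph{Fourier Analysis on Groups}, or Folland, \emph{A Course in Abstract Harmonic Analysis}). A final cosmetic remark: since in our application $J$ is real-valued with $J(-x)=J(x)$, by replacing $\hat{\sigma}$ by its symmetrization under $y\mapsto -y$ if necessary one may assume $\hat{\sigma}$ is symmetric, which makes the right-hand side of \eqref{eq:fourier} manifestly real and respects the symmetry assumption in Assumption \ref{assu:new}.
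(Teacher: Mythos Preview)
The paper's own ``proof'' consists entirely of citations to the literature (Ferreira--Menegatto, Sasv\'ari, Stewart), exactly as you anticipated in your final paragraph. Your sketch is therefore considerably more detailed than what the paper actually provides, and the overall strategy---easy direction by direct computation, hard direction via regularization, nonnegativity of the transform, tightness and Prokhorov---is the standard one and is sound.

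There is one small technical slip in the necessity sketch. You write $J_{\epsilon}:=J\ast\phi_{\epsilon}$ and assert that $J_{\epsilon}$ is integrable. But a continuous positive definite function is only bounded (indeed $|J(x)|\le J(0)$), and convolving a merely bounded function with an $L^{1}$ Gaussian yields another bounded function, not an $L^{1}$ one; so you cannot define the inverse Fourier transform $g_{\epsilon}$ as an honest function this way. The usual repair is to \emph{multiply} by the Gaussian rather than convolve: set $J_{\epsilon}(x):=J(x)\,\txte^{-\epsilon|x|^{2}}$, which is positive definite (as a product of positive definite functions) and now genuinely lies in $L^{1}\cap L^{2}$, so its Fourier transform is a continuous function and the rest of your argument (nonnegativity via Riemann sums, tightness from continuity at $0$, Prokhorov, passage to the limit) proceeds as you describe. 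With that adjustment the sketch is correct.
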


\begin{proof} See \citep[Theorem 7.5]{Ferreira:2013ei} and also
\citep{Sasvari:ek,Stewart:1976ho}. \end{proof} This result
can be applied under the caveat that it essentially holds for $\R^{d}$,
but we need to truncate and rescale accordingly (while restricting
to radially symmetric support) in order to adapt it to use in our
context.

We shall give a couple of classical examples. \begin{ex}\label{ex:kernel-examples}
The following examples for $J$ satisfy relation (\ref{eq:fourier})
for some $\hat{\sigma}$, see e.g. \citep{Sasvari:ek}. 
\begin{enumerate}
\item (\emph{centered Gauss distribution}) $J(x)=\txte^{-\frac{1}{2}\langle x,Mx\rangle}$,
where $M\in\R^{d\times d}$ is a symmetric and positive semi-definite
matrix. 
\item (\emph{centered Cauchy distribution}) $J(x)=\txte^{-\sqrt{\langle x,Mx\rangle}}$,
where $M\in\R^{d\times d}$ is a symmetric and positive semi-definite
matrix. 
\item (\emph{centered Laplace distribution}) $J(x)=\left(1+\frac{1}{2}\langle x,Mx\rangle\right)^{-1}$,
where $M\in\R^{d\times d}$ is a symmetric and positive semi-definite
matrix. 
\item (\emph{uniform distribution on $[-1,1]^{d}$}) $J(x)=J(x_{1},\ldots,,x_{d})=\prod_{j=1}^{d}\frac{\sin(x_{j})}{x_{j}}$,
where the factors of the product are (by definition) equal to $1$
if $x_{j}=0$. 
\item (\emph{symmetric sums of Dirac distributions}) $J(x)=\sum_{i=1}^{\infty}a_{i}\cos(\langle m_{i},x\rangle)$,
where $a_{i}\ge0$ with $\sum_{i=1}^{\infty}a_{i}=1$ and $m_{i}\in\R^{d}$,
$m_{i}\not=\pm m_{j}$ for $i\not=j$.
\end{enumerate}
\end{ex}

In fact, by \citep[Theorem 1.3.13]{Sasvari:ek}, relation (\ref{eq:fourier})
can be verified for some $J$ with the required properties for any
probability measure $\hat{\sigma}$ on $(\R^{d},\Bcal(\R^{d}))$ which
is \emph{symmetric}, that is, $\hat{\sigma}(B)=\hat{\sigma}(-B)$
for all $B\in\Bcal(\R^{d})$. These are precisely the distributions
of random vectors $X$ for which it holds that $X$ and $-X$ have
the same distribution. In this case, we have that 
\[
J(x)=\int_{\R^{d}}\cos(\langle y,x\rangle)\,\hat{\sigma}(\txtd y),\quad x\in\R^{d},
\]
see \citep[Theorem 1.3.13]{Sasvari:ek} again. Hence,
we can always use the Fourier relation (\ref{eq:fourier}) to (explicitly
or numerically) check, whether our gradient-structure formulation
applies. The characterization
extends of course to positive multiples of $\hat{\sigma}$.

For instance, by Example \ref{ex:kernel-examples} (ii), we see that
the exponential weight used e.g. by \citet[Equation (9.49)]{neuralfields2014ch9}
satisfies our assumptions. Also, by Example \ref{ex:kernel-examples}
(v), we see that a finite sum of cosine functions, as considered e.g.
by \citet[Section 5]{VeltzFaugeras2010} for the so-called periodic
ring model introduced by \citet{ShrikiHanselSompolinsky2003}, satisfies
our requirements. The cosine weight with period matching the domain
size can also be found in \citep{neuralfields2014ch4}, we note, however,
that the (non-Lipschitz) Heaviside activation function used therein
is out of the scope of our paper. 

\begin{rem}
\begin{enumerate}
\item We also point out in this context that there seems to be potential
for confusion in the literature regarding nonnegative definiteness
of certain classes of kernels commonly used in neural fields. For
example, consider the (one-dimensional) \emph{Mexican hat kernel}
\begin{equation}
J(x)=\left(1-x^{2}\right)\exp\left(-\frac{x^{2}}{2}\right),\label{eq:MexHat}
\end{equation}
which can easily be normalized by a positive pre-factor, scaled in
the spatial variable and/or extended into higher dimensions. In a
work related to neuroscience one finds the statement that ``the Mexican
hat kernel differs from the other kernels, since it is not positive
definite''~\citep{BachmairSchoell}. While in certain works on machine
learning one finds that ``Mexican hat kernel(s) are Mercer kernel(s)''~\citep{Xieetal},
which just implicitly means that the Mexican hat kernel is nonnegative
definite (or positive semi-definite). Indeed, the second statement
seems correct in view of~(\ref{eq:fourier}) as shown by the simple
calculation 
\begin{equation}
J(x)=\left(1-x^{2}\right)\exp\left(-\frac{x^{2}}{2}\right)=\frac{1}{\sqrt{2\pi}}\int_{\R}\txte^{\txti\xi x}\exp\left(-\frac{\xi^{2}}{2}\right)\xi^{2}~\txtd\xi
\end{equation}
so that Fourier inversion is possible, i.e., there even exists a measure
$\hat{\sigma}$ with an explicit nonnegative density.
\item Let us also discuss another family of kernels, prominent in the literature
of neural fields, with a similar shape than that of \eqref{eq:MexHat},
which are therefore also known as \emph{Mexican hat kernels}. Let
$0<A<1$, $s>1$ and consider
\begin{equation}
J(x)=\exp\left(-\frac{x^{2}}{2}\right)-A\exp\left(-\frac{x^{2}}{s^{2}}\right).\label{eq:Mexican_hat_2}
\end{equation}
We get that
\begin{equation}
J(x)=\frac{1}{\sqrt{2\pi}}\int_{\R}\txte^{\txti\xi x}\left[\exp\left(-\frac{\xi^{2}}{2}\right)-\frac{As}{\sqrt{2}}\exp\left(-\frac{s^{2}\xi^{2}}{4}\right)\right]~\txtd\xi.\label{eq:As-formula}
\end{equation}
In order that $J$ is given by some nonnegative Borel measure as in
\eqref{eq:fourier}, we require that its density, as given inside
the square brackets in \eqref{eq:As-formula}, is nonnegative, which,
by an elementary computation, is true if and only if $\sqrt{2}\le s\le\frac{\sqrt{2}}{A}$.
\item In fact, \citet[Equation (1.29)]{neuralfields2014ch1} give yet another
example of a kernel representing short range excitation and long-range
inhibition (that is, a \emph{Mexican hat kernel}). Namely, for $0<\Gamma<1$,
$\gamma_{1}>\gamma_{2}>0$,
\[
J(x)=\exp(-\gamma_{1}|x|)-\Gamma\exp(-\gamma_{2}|x|).
\]
We have that
\begin{equation}
J(x)=\frac{1}{\sqrt{2\pi}}\int_{\R}\txte^{\txti\xi x}\left[2\left(\frac{\gamma_{1}}{\gamma_{1}^{2}+\xi^{2}}-\Gamma\frac{\gamma_{2}}{\gamma_{2}^{2}+\xi^{2}}\right)\right]~\txtd\xi,\label{eq:mexican_hat3}
\end{equation}
see also \citep[Equation (1.30)]{neuralfields2014ch1}. It can be
seen easily, that the density inside the square brackets in \eqref{eq:mexican_hat3}
is nonnegative if and only if $\Gamma\le\frac{\gamma_{2}}{\gamma_{1}}$.
\item \citet[Equation (5.7)]{neuralfields2014ch5} suggests the following
kernels (among others), for $b>0$,
\[
J(x)=\txte^{-b|x|}\left(b\sin(|x|)+\cos(x)\right)
\]
and the \emph{wizard hat} (see \citep[p. 157]{neuralfields2014ch5})
\[
J(x)=\frac{1}{4}\left(1-|x|\right)\txte^{-|x|}.
\]
As discussed already by \citet{neuralfields2014ch5}, both examples
have a nonnegative Fourier transform and can hence be considered within
the scope of our this work.
\end{enumerate}
\end{rem}

\section{\label{sec:Summary-Discussion} Summary \& Discussion}

In this work we have proved that the stochastic Amari neural field
model has a gradient flow structure for certain types of neural connectivity
kernels. We have used this structure to show well-posedness of the
model in a nonlocal Hilbert space build from the connectivity kernel.
Furthermore, we have shown the existence of unique Gibbs-type invariant
measures for the associated Feller semigroup. These results provide
a strong indication that neural field models can be analyzed using
gradient flow techniques if one builds the actual gradient flow space
using the neural connectivity pattern via a kernel. Yet, for certain
kernels, we conjecture that there is actually no gradient flow structure,
even in an adapted nonlocal Hilbert space. In fact, it seems plausible
to expect this from a neuroscience perspective as we also have to
allow for the possibility of time-periodic patterns to model several
dynamical effects in the brain. From the viewpoint of neurological
disorders as well as multi-stable visual perception effects, our results
also contribute to a very natural idea: if measurements, such as EEG
measurements for epileptic seizures, show a completely different high-oscillation/synchrony
regime in comparison to normal brain functioning, one may expect that
any underlying mathematical model might also undergo a major transition
between regimes. In fact, this transition may not only manifest itself
in changing model parameters but it may find a clear expression in
the mathematical type of the equations themselves. In our work, we
have shown that such a structural model transition indeed is possible.
For certain parameters, the connectivity kernel may satisfy our assumptions
so that typical gradient-flow dynamics is observed leading to energy
dissipation and equilibration towards an invariant measure. Yet, for
other parameters, our kernel may not lead to a gradient flow and this
opens up the possibility of wide variety of neural activity patterns.

\appendix

\section{\label{sec:Cylindrical-Wiener-processes} Cylindrical Wiener processes
in Hilbert spaces}

Let $\{v_{i}\}_{i\in\N}\subset H$ be a complete orthonormal system
for $H$ and let $\{\beta_{t}^{i}\}_{i\in\N}$ be a collection of
independent real-valued standard Brownian motions modeled on a filtered
normal probability space $(\Omega,\Fcal,\{\Fcal_{t}\}_{t\ge0},\P)$.
Then the cylindrical Wiener process $\{W_{t}\}_{t\ge0}$ with covariance
$Q=\operatorname{Id}$ has the formal representation 
\begin{equation}
W_{t}=\sum_{i=1}^{\infty}v_{i}\beta_{t}^{i},\quad t\ge0,\label{eq:wiener-1}
\end{equation}
which is a standard Wiener process in a weaker separable Hilbert space
$U$, such that there exists a Hilbert-Schmidt embedding $\iota:H\to U$,
$\iota\in L_{2}(H,U)$ and $\{W_{t}\}_{t\ge0}$ has the covariance
operator $\iota\iota^{\ast}$. By \citep[Proposition 4.7 and Proposition 4.8]{DaPrZa:2nd},
then $(\iota\iota^{\ast})^{\frac{1}{2}}(U)=H$, and one can always
find $U$ and $\iota$ with the above properties such that the representation
(\ref{eq:wiener-1}) holds; see \citep[Chapter 4]{DaPrZa:2nd} for
details. For our purposes, it is sufficient to set $U$ equal to the
abstract completion of $L^{2}(\Bcal)$ with respect to the alternative
scalar product $(u,v)_{U}:=\sum_{n=1}^{\infty}n^{-2}\hat{u}_{n}\hat{v}_{n}$,
where $u,v\in H$ and $\{\hat{u}_{n}\},\{\hat{v}_{n}\}\in\ell^{2}$
are such that $u=\sum_{n=1}^{\infty}\hat{u}_{n}v_{n}$ and $v=\sum_{n=1}^{\infty}\hat{v}_{n}v_{n}$.
Then $\iota:=\operatorname{Id}$ is a Hilbert-Schmidt embedding from
$H$ into $U$.

\def\cprime{$'$}

\end{document}